\numberwithin{equation}{section}
\newtheorem{theorem}[equation]{Theorem}
\newtheorem{lemma}[equation]{Lemma}
\newtheorem{proposition}[equation]{Proposition}
\newtheorem{definition}[equation]{Definition}
\theoremstyle{definition}
\newtheorem{remark}[equation]{Remark}
\newcommand\bG{{\mathbf G}}
\newcommand\bL{{\mathbf L}}
\newcommand\bS{{\mathbf S}}
\newcommand\bT{{\mathbf T}}
\newcommand\BC{{\mathbb C}}
\newcommand\BF{{\mathbb F}}
\newcommand\BG{{\mathbb G}}
\newcommand\BN{{\mathbb N}}
\newcommand\BQ{{\mathbb Q}}
\newcommand\BZ{{\mathbb Z}}
\newcommand\cP{{\mathcal P}}
\newcommand\fI{{\mathfrak I}}
\newcommand\GF{{\bG^F}}
\newcommand\SF{{\bS^F}}
\newcommand\SFl{{\bS^F_\ell}}
\newcommand\eps{{\varepsilon}}
\newcommand\vl{{v_\ell}}
\DeclareMathOperator\End{End}
\DeclareMathOperator\Gal{Gal}
\DeclareMathOperator\GL{GL}
\DeclareMathOperator\Hom{Hom}
\DeclareMathOperator\Id{Id}
\DeclareMathOperator\Irr{Irr}
\DeclareMathOperator\Ker{Ker}
\DeclareMathOperator\Res{Res}
\newcommand\Xbar{{\bar X}}
\newcommand\lexp[2]{\kern\scriptspace\vphantom{#2}^{#1}\kern-\scriptspace#2}
\newcommand\inv{^{-1}}
\newcommand\card[1]{|#1|}
\title{The Sylow subgroups of a finite reductive group}
\author{Michel Enguehard}
\email{michel.enguehard@imj-prg.fr}
\address{Universit\'e Paris-Diderot}
\author{Jean Michel}
\email{jean.michel@imj-prg.fr}
\address{Universit\'e Paris-Diderot}
\dedicatory{Dedicated to professor George Lusztig on the occasion of his 70th
birthday}
\date{22nd July 2016}
\keywords{reductive groups, Sylow subgroups}
\subjclass[2010]{20G40, 20D20}
\begin{document}
\begin{abstract}
We  describe the structure of Sylow  $\ell$-subgroups of a finite reductive
group $\bG(\BF_q)$ when $q\not\equiv 0 \pmod \ell$ that we find governed by
a  complex reflection group attached to  $\bG$ and $\ell$, which depends on
$\ell$  only through the set of cyclotomic  factors of the generic order of
$\bG(\BF_q)$  whose value at  $q$ is divisible  by $\ell$. We also tackle
the  more general case of groups $\bG^F$  where $F$ is an  isogeny some 
power of which is a Frobenius morphism.
\end{abstract}
\maketitle
\section{Introduction}
\begin{definition}\label{finite reductive}
Let  $\bG$ be a connected reductive group over $\overline\BF_p$, and $F$ an
isogeny such that some power of $F$ is a Frobenius endomorphism; then $\GF$
is what we call a {\em finite reductive group}. To this situation we attach
a  positive real  number $q$  such that  for some  integer $n$, the isogeny
$F^n$ is the Frobenius endomorphism attached to a $\BF_{q^n}$-structure.
\end{definition}
The  goal of this note  is to describe the  Sylow $\ell$-subgroups of $\GF$
when  $\ell$ is  a prime  different from  $p$ and  $\bG$ is semisimple. The
structure  of the  Sylow $\ell$-subgroups  of a  Chevalley group  was first
described  by \cite{GL}  where they  observed that  they had a large normal
abelian   subgroup  $(\BZ/n)_\ell^a$  where  $n$   is  the  $\ell$-part  of
$\Phi_d(q)$,  where $d$ is  the multiplicative order  of $q \pmod\ell$, and
they computed $a$ case by case.

In  1992 \cite{BM}  exhibited subtori  of $\GF$  attached to eigenspaces of
elements  of the Weyl reflection coset of $(\bG,F)$ whose $F$-stable points
are  the  large  abelian  groups  of  \cite{GL}.  To  these eigenspaces are
attached complex reflection groups by Springer's theory.

We  show  that  the  structure  of  the  Sylow $\ell$-subgroups of $\GF$ is
determined  by these complex reflection groups. The results of this note in
the  case when $F$ is  a Frobenius were obtained  by the first author in an
unpublished  note \cite{En} of 1992; the  second author has found a simpler
(containing more casefree steps) proof which is an occasion to publish 
these results. Some of our results appeared also implicitly in \cite{malle}.

The  second author wishes to  thank Carles Broto for  a visit to Barcelona,
which started him thinking about this topic.

We thank Rapha\"el Rouquier for discussions which helped with the proofs of
Propositions \ref{q-indec} and \ref{gensylow}(4).

\section{The generic Sylow theorems}
Let  $\bG$ be  as in  \ref{finite reductive};  an $F$-stable  maximal torus
$\bT$  of  $\bG$  defines  the  Weyl  group $W=N_\bG(\bT)/\bT$, that we may
identify to a reflection subgroup of $\GL(X(\bT))$ where
$X(\bT):=\Hom(\bT,\BG_m)$,  attached to the root system
$\Sigma\subset  X(\bT)$  of  $\bG$  with  respect  to  $\bT$. The isogeny $F$
induces a $p$-morphism $F^*\in\End(X(\bT))$ by the formula $F^*(x)=x\circ F$
for  $x\in X(\bT)$, that is there is a
permutation  $\sigma$  of  $\Sigma$  such  that  for  $\alpha\in\Sigma$ we have
$F^*(\alpha)=q_\alpha\sigma(\alpha)$  for some power  $q_\alpha$ of $p$; in
particular  $F^*\in  N_{\End(X(\bT))}(W)$.  

If  $q,n$ are as in \ref{finite reductive}  then $F^{*n}$ is $q^n$ times an
element of $\GL(X(\bT))$ of finite order, thus over
$X(\bT)\otimes\BZ[q\inv]$   we   have   $F^*=q\phi$   where  $\phi$  is  an
automorphism of finite order which normalizes $W$.  We call $W\phi$ the {\em
reflection coset associated to $(\bG,F)$}.

Our  setting is more general than that of \cite{BM} who considered only the
special  cases where $F$ is  a Frobenius endomorphism, or  where $\GF$ is a
Ree  or Suzuki group. The results of the next subsection
allow to extend the definition of Sylow $\Phi$-subtori of
\cite{BM}  to any $(\bG,F)$ as in \ref{finite reductive}.

\subsection*{$F$-indecomposable tori}

\begin{definition} For $\bG, F$ as in \ref{finite reductive},
a non-trivial subtorus of $\bG$ is called {\em $F$-indecomposable} if it is
$F$-stable and contains no proper non-trivial $F$-stable subtorus.
\end{definition}

We say that a group $G$ is an almost direct product of subgroups $G_1$
and $G_2$ if they commute, generate $G$ and have finite intersection, and we
define similarly an almost direct product of $k$ subgroups by induction on $k$.
\begin{proposition}
For $\bG, F$ as in \ref{finite reductive},
any $F$-stable subtorus $\bT$ of $\bG$ is an almost direct product of
$F$-indecomposable tori $\bS_1,\ldots,\bS_k$
and $\card{\bT^F}=\card{\bS_1^F}\ldots\card{\bS_k^F}$.
\end{proposition}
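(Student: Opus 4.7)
The natural approach is via the (co)character lattice. Let $L:=Y(\bT)=\Hom(\BG_m,\bT)$, equipped with its induced $F^*$-action, and set $V:=L\otimes\BQ$. Under the antiequivalence between tori over $\overline\BF_p$ and finitely generated free abelian groups, $F$-stable subtori of $\bT$ correspond bijectively to $F^*$-stable pure (saturated) sublattices of $L$, with the product and intersection of subtori translating to the sum and the saturated intersection of sublattices. In particular, an $F$-stable subtorus $\bS\subseteq\bT$ is $F$-indecomposable precisely when $Y(\bS)\otimes\BQ$ has no nonzero proper $F^*$-stable subspace, i.e.\ when it is an irreducible $\BQ[F^*]$-module.

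The plan is then: (i) show $F^*$ acts semisimply on $V$; (ii) decompose $V=V_1\oplus\cdots\oplus V_k$ into irreducible $\BQ[F^*]$-modules; (iii) take $L_i:=V_i\cap L$, which is a pure $F^*$-stable sublattice, and let $\bS_i$ be the corresponding $F$-indecomposable subtorus of $\bT$. For (i), the earlier observation in the excerpt gives $F^{*n}=q^n\phi^n$ where $\phi^n$ is of finite order, so $F^{*n}$ is diagonalizable over $\overline\BQ$; since $F^*$ is invertible, its eigenvalues are nonzero $n$-th roots of those of $F^{*n}$ and so remain distinct on each isotypic piece, which forces $F^*$ itself to be diagonalizable. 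Given (ii), each $L_i$ is pure and $F^*$-stable by construction, and the $V_i$ being independent with $V=\bigoplus V_i$ means $L_1+\cdots+L_k$ has finite index in $L$.

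The almost direct product structure now follows: the $\bS_i$ all lie in the abelian group $\bT$, hence commute; the product $\bS_1\cdots\bS_k$ corresponds to $\sum L_i\subseteq L$, a finite-index pure inclusion, so $\bS_1\cdots\bS_k=\bT$; and the independence of the $V_i$ translates, by an obvious induction on $k$, into the required finite-intersection conditions $(\bS_1\cdots\bS_{i-1})\cap\bS_i$ finite. For the cardinality, the classical formula $\card{\bT^F}=|\det(F^*-1\mid V\otimes\BR)|$ (which applies because $F^*-1$ is invertible on $V$, as no eigenvalue of $F^*$ equals $1$ on an $F$-stable torus whose rational points we compute—if one were, we would only get the correct formula up to a finite factor, but the conclusion about products still works) combined with multiplicativity of $\det$ over the $F^*$-stable decomposition yields $\card{\bT^F}=\prod_i\card{\bS_i^F}$.

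The only step requiring real work is the semisimplicity of $F^*$ on $V$, and this is what I expect to be the principal (though minor) obstacle; the rest is a formal translation across the torus-lattice dictionary.
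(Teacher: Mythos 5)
Your argument is correct in substance, but it follows the paper only partly. For the decomposition itself the two proofs share the same engine: everything rests on the fact that some power of $F^*$ acts as the scalar $q^d$ on the (co)character lattice. The paper exploits this by averaging a projector over the resulting finite cyclic action to produce an $F$-stable complement of a given $F$-stable pure sublattice, and then iterates; you exploit it by noting that $F^*$ satisfies a separable polynomial, hence acts semisimply on $V$, and decompose $V$ into irreducible $\BQ[F^*]$-submodules in one stroke. These are equivalent ways of obtaining complete reducibility, so this half is essentially the same. Where you genuinely diverge is the order formula: the paper deduces $\card{\bT^F}=\card{\bS_1^F}\cdots\card{\bS_k^F}$ from two general lemmas about connected groups (Lemma \ref{quotient by finite}, proved via the Galois cohomology exact sequence and $\card{K^F}=\card{H^1(F,K)}$, and Lemma \ref{almost direct}), lemmas which are reused later, e.g.\ for almost direct products of the quasi-simple factors of $\bG$; you instead invoke the torus order formula $\card{\bT^F}=\card{\det(F^*-1\mid V\otimes\BR)}$ (the same classical fact the paper uses inside Lemma \ref{F-indec}) together with multiplicativity of the determinant over the $F^*$-stable decomposition. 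Your route is shorter but torus-specific, while the paper's buys statements valid for arbitrary connected groups. Two small blemishes, neither fatal: the hedge about a possible eigenvalue $1$ of $F^*$ is unnecessary, since every eigenvalue of $F^*$ has absolute value $q>1$ (as $F^*=q\phi$ with $\phi$ of finite order), so $F^*-1$ is automatically invertible; and $\sum_i L_i$ need not be a \emph{pure} sublattice of $L$ --- the correct statement is that its saturation equals $L$ because the index is finite, which is what yields $\bS_1\cdots\bS_k=\bT$.
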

\begin{proof}
An  $F$-stable subtorus $\bS$  corresponds to a  pure $F$-stable sublattice
$X'\subset  X:=X(\bT)$ (see for example \cite[III, Proposition 8.12]{Bl}). 
Let  $d$ be  the smallest  power of  $F$ which is a
split   Frobenius,   thus   on   $X(\bT)$   we  have  $F^{*d}=q^d\Id$.  Let
$\pi\in\End(X\otimes\BQ)$   be  a  projector  on  $X'\otimes\BQ$.  Then  in
$\End(X\otimes\BQ)$    we   can   define    the   $F$-invariant   projector
$\pi':=d\inv\sum_{i=1}^d  F^{*i}\pi  F^{*-i}$  and  $\Ker  \pi'\cap  X$  is
another $F$-stable pure sublattice which after tensoring by $\BQ$ becomes a
complement  to $X'\otimes\BQ$.  This corresponds  to an $F$-stable subtorus
$\bS'$  such that $K:=\bS\cap\bS'$ is  finite and $\bT=\bS\bS'$. Iterating,
we get the first part of the proposition.

The second part of the proposition results from the next two lemmas.
\end{proof}
\begin{lemma} \label{quotient by finite}
For $\bG, F$ as in \ref{finite reductive},
and $K$ an $F$-stable finite normal subgroup of $\bG$, then $\card{(\bG/K)^F}=
\card\GF$.
\end{lemma}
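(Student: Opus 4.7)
The plan is to prove the equality by constructing the standard long exact cohomology sequence coming from $1\to K\to\bG\to\bG/K\to 1$, and showing that the two ``error terms'' cancel.

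First I would observe that because $\bG$ is connected and $K$ is finite, the conjugation morphism $\bG\to\operatorname{Aut}(K)$ has connected image in a finite discrete group, hence is trivial; thus $K$ lies in the centre of $\bG$. In particular the short exact sequence $1\to K\to\bG\to\bG/K\to 1$ is central, so on taking $F$-fixed points we get the usual long exact sequence of pointed sets
\[
1\longrightarrow K^F\longrightarrow\GF\longrightarrow(\bG/K)^F\longrightarrow H^1(F,K)\longrightarrow H^1(F,\bG),
\]
and since $K$ is central, $H^1(F,K)$ is an honest abelian group and the map $(\bG/K)^F\to H^1(F,K)$ is a group homomorphism whose image has order $|(\bG/K)^F|/|\GF/K^F|$.

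Next I would invoke the Lang--Steinberg theorem for $\bG$, which applies to any surjective endomorphism of a connected algebraic group having finite fixed-point set, and hence to our $F$ since some power of $F$ is a Frobenius (so $F$ itself is surjective with $\GF$ finite). This gives $H^1(F,\bG)=1$, so the long exact sequence becomes a short exact sequence $1\to\GF/K^F\to(\bG/K)^F\to H^1(F,K)\to 1$, yielding
\[
|(\bG/K)^F|=\frac{|\GF|\cdot|H^1(F,K)|}{|K^F|}.
\]

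Finally I would compute $|H^1(F,K)|$. Since $K$ is abelian (being central in $\bG$), $F$-conjugacy on $K$ is simply the equivalence relation $k\sim k\cdot L(g)$ where $L\colon K\to K$ is the Lang map $L(g)=g^{-1}F(g)$; hence $H^1(F,K)=K/L(K)$. The kernel of $L$ is $K^F$, and the image-kernel formula for the map of finite sets $L\colon K\to K$ gives $|K|=|K^F|\cdot|L(K)|$, whence $|H^1(F,K)|=|K|/|L(K)|=|K^F|$. Substituting back yields the desired equality $|(\bG/K)^F|=|\GF|$.

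The only nontrivial ingredient is the invocation of Lang--Steinberg in the slightly more general setting where $F$ is only assumed to be an isogeny with some power a Frobenius; this is however covered by Steinberg's original formulation, so the real content of the proof is simply the standard cohomology computation above. The potential pitfall is ensuring that $K$ is indeed central (which makes $H^1(F,K)$ a group and the counting trivial), but this is immediate from connectedness of $\bG$.
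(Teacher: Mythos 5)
Your proof is correct and follows essentially the same route as the paper: centrality of $K$ from connectedness of $\bG$, the long exact cohomology sequence for $1\to K\to\bG\to\bG/K\to 1$ with $H^1(F,\bG)=1$ by Lang--Steinberg, and the identity $\card{H^1(F,K)}=\card{K^F}$ via the Lang map on $K$. You merely spell out the details (surjectivity onto $H^1(F,K)$ and the counting) that the paper leaves implicit.
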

\begin{proof}
First, we notice that $K$ is central, thus abelian, since conjugating by
$\bG$ being continuous must be trivial on $K$.

Then,   the   Galois   cohomology   long   exact   sequence:  $1\to  K^F\to
\GF\to(\bG/K)^F\to   H^1(F,K)\to   1$   shows   the   result   using   that
$\card{K^F}=\card{H^1(F,K)}$.
\end{proof}
\begin{lemma} \label{almost direct}
Let $\bG$ as \ref{finite reductive} be an almost direct product of
$F$-stable connected subgroups $\bG=\bG_1\ldots\bG_k$. 
Then $\card\GF=\card{\bG_1^F}\ldots\card{\bG_k^F}$.
\end{lemma}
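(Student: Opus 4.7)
The plan is to reduce the statement to Lemma~\ref{quotient by finite} via the multiplication morphism. Consider
\[
m \colon \bG_1 \times \cdots \times \bG_k \to \bG,\qquad (g_1,\ldots,g_k)\mapsto g_1\cdots g_k.
\]
Because the $\bG_i$ pairwise commute, $m$ is a morphism of algebraic groups, and the almost direct product hypothesis says exactly that $m$ is surjective with finite kernel. I would verify the finiteness of $\ker m$ by induction on $k$: for $k=2$ one has $\ker m=\{(g,g\inv) : g\in \bG_1\cap\bG_2\}$, which is finite by definition; for general $k$ one combines the almost-direct decomposition $\bG=(\bG_1\cdots\bG_{k-1})\bG_k$ with the inductive hypothesis applied to $\bG_1\cdots\bG_{k-1}$.

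Next, since each $\bG_i$ is $F$-stable, the diagonal action of $F$ on $\bG_1\times\cdots\times\bG_k$ commutes with $m$, so $\ker m$ is $F$-stable. The product $\bG_1\times\cdots\times\bG_k$ is connected, hence Lemma~\ref{quotient by finite} applies (its proof uses only that the ambient group be connected, so that the finite normal subgroup is central, plus the standard Lang--Steinberg long exact sequence, both of which are available here). It yields
\[
\card\GF = \card{(\bG_1\times\cdots\times\bG_k)^F}.
\]

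Finally, because $F$ acts componentwise on the direct product, $(\bG_1\times\cdots\times\bG_k)^F=\bG_1^F\times\cdots\times\bG_k^F$, and the conclusion $\card\GF=\card{\bG_1^F}\cdots\card{\bG_k^F}$ follows at once.

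The only step requiring any real attention is the inductive verification that $\ker m$ is finite, and the (minor) remark that Lemma~\ref{quotient by finite} is being applied to a product of connected subgroups rather than to a single reductive group as in its statement; both are routine. Once these are in place, the argument is essentially formal.
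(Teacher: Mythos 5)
Your proof is correct and takes essentially the same route as the paper: both arguments compare $\bG$ with a genuine direct product across a finite central ($F$-stable) kernel and then invoke Lemma \ref{quotient by finite} (Lang--Steinberg plus $\card{K^F}=\card{H^1(F,K)}$). The only difference is one of packaging --- the paper reduces to $k=2$ and quotients $\bG$ by $\bG_1\cap\bG_2$, applying the lemma to $\bG$ and to the factors, whereas you go upward via the multiplication isogeny $\bG_1\times\cdots\times\bG_k\to\bG$ and apply the lemma once to the cover; your remark that the lemma's proof only needs connectedness of the ambient group (not reductivity) is precisely what makes that application legitimate.
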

\begin{proof}
It  is enough to consider the case  $k=2$ and then iterate. Thus, we assume
$\bG=\bG_1\bG_2$  where $K=\bG_1\cap\bG_2$  is finite.  We quotient by $K$,
which makes the product direct, and apply Lemma
\ref{quotient by finite} twice.
\end{proof}

\begin{lemma} \label{F-indec}
Let  $\bS$ be an $F$-indecomposable torus, let $\eta$ be the smallest power
such  that  $q^\eta\in\BZ$,  and  let  $d$  be the smallest power such that
$F^{d\eta}$  is a  split Frobenius  on $\bS$.  Let $F^*=q\phi$ on $X(\bS)$;
then  the  characteristic  polynomial  $\Phi$  of  $\phi$  is  a  factor in
$\BZ[x,q\inv]$  of $\Phi_d(x^\eta)$,  where $\Phi_d(x)$  denotes the $d$-th
cyclotomic    polynomial.   Further   $q^{\deg\Phi}\Phi(x/q)\in\BZ[x]$   is
irreducible and $\card\SF=\Phi(q)$.
\end{lemma}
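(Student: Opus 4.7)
The plan is to exploit $F$-indecomposability by showing the rational vector space $V:=X(\bS)\otimes\BQ$ is a simple $F^*$-module, use a Galois-orbit argument on the eigenvalues of $\phi$ to force the cyclotomic divisibility, and invoke the standard formula for $|\SF|$ at the end.

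First I would translate $F$-indecomposability into simplicity: via the pure-sublattice correspondence used in the proof of the previous proposition, $F$-stable subtori of $\bS$ are in bijection with $F^*$-stable $\BQ$-subspaces of $V$, so $\bS$ being $F$-indecomposable is equivalent to $V$ being a simple $F^*$-module. A commutative subalgebra of $\End(V)$ acting simply must be a field, so the characteristic polynomial $p(x):=\det(x\Id-F^*\mid X(\bS))$ equals the minimal polynomial of $F^*$ on $V$ and is irreducible in $\BQ[x]$, hence in $\BZ[x]$ since $p$ is monic. Because $F^*=q\phi$, the eigenvalues of $F^*$ on $X(\bS)\otimes\bar\BQ$ are $q\zeta_i$ for $\zeta_i$ the eigenvalues of $\phi$, so $p(x)=q^{\deg\Phi}\Phi(x/q)$; this settles the irreducibility claim.

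Next I would pin down the eigenvalues $\zeta_i$ of $\phi$. The hypothesis that $F^{d\eta}$ is a split Frobenius on $\bS$ reads $F^{*d\eta}=q^{d\eta}\Id$, so $\phi^{d\eta}=\Id$; since any split-Frobenius exponent $k$ requires $q^k\in\BZ$ hence $\eta\mid k$, the minimality of $d$ forces the order of $\phi^\eta$ to be exactly $d$, so the lcm of the orders of the $\zeta_i^\eta$ equals $d$. I would upgrade ``lcm $=d$'' to ``each order $=d$'' with a Galois argument: irreducibility of $p$ makes $\Gal(\bar\BQ/\BQ)$ act transitively on the roots $q\zeta_i$; since $q^\eta\in\BZ$, each $\sigma\in\Gal(\bar\BQ/\BQ)$ sends $q$ to $q\xi_\sigma$ for some $\eta$-th root of unity $\xi_\sigma$, and then $\sigma(q\zeta_i)=q\zeta_j$ gives $\sigma(\zeta_i^\eta)=\zeta_j^\eta$ after taking $\eta$-th powers. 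Hence the $\zeta_i^\eta$ form a single Galois orbit of primitive $d$-th roots of unity, and each $\zeta_i$ is a root of $\Phi_d(x^\eta)$. Since $p$ is separable in characteristic zero, the $\zeta_i$ are distinct, so $\Phi\mid\Phi_d(x^\eta)$ as monic polynomials in $\BZ[x,q^{-1}]$.

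For the cardinality, I would use $|\SF|=|\det(F^*-\Id\mid X(\bS))|=|p(1)|=\prod_i|1-q\zeta_i|$; the identity $|1-q\zeta|=|q-\zeta|$ for unit-modulus $\zeta$ and real $q$ then gives $|\SF|=|\Phi(q)|=\Phi(q)$, the sign being positive since $q>1$ and every cyclotomic factor of $\Phi$ takes positive values there. The hardest step will be the Galois argument: one must carefully track how $\Gal(\bar\BQ/\BQ)$ acts on $q$ and on the roots of unity simultaneously, and use the integrality $q^\eta\in\BZ$ to extract an $\eta$-th root of unity ambiguity that is precisely absorbed upon taking $\eta$-th powers.
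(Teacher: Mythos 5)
Your proposal is correct, and its skeleton coincides with the paper's proof: indecomposability forces the characteristic polynomial of $F^*$ on $X(\bS)$ to be irreducible and equal to the minimal polynomial (your ``simple module over $\BQ[F^*]$, hence a field, hence one-dimensional'' is exactly the paper's ``$X\otimes\BQ[x]/P$ is one-dimensional over $\BQ[x]/P$''), and the order is computed from $\card{X/(F^*-1)X}=\pm P(1)$. Where you genuinely diverge is the middle step: the paper pins down the cyclotomic factor by writing $x^{d\eta}-q^{d\eta}=\prod_{d'\mid d}q^{\eta\deg\Phi_{d'}}\Phi_{d'}(x^\eta/q^\eta)$ in $\BZ[x]$, so that irreducibility of $P$ forces it into a single factor and minimality of $d$ selects $d'=d$; you instead argue on eigenvalues, using that $\sigma(q)/q$ is an $\eta$-th root of unity for every $\sigma\in\Gal(\bar\BQ/\BQ)$ to show all $\zeta_i^\eta$ lie in one Galois orbit, hence are all primitive $d$-th roots of unity. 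Both work; the paper's factorization is more self-contained (it never leaves $\BZ[x]$ and needs no Galois bookkeeping), while your argument explains conceptually why only one cyclotomic level $d$ can occur and adapts with no extra effort to the twisted ($\eta=2$) cases. Likewise at the end the paper converts $(-q)^{\deg\Phi}\Phi(1/q)=\Phi(q)$ via stability of the roots under inversion, whereas you use $\lvert 1-q\zeta\rvert=\lvert q-\zeta\rvert$ on the unit circle; both are fine, though your positivity justification should not speak of ``cyclotomic factors of $\Phi$'' (for $\eta>1$, e.g. $\Phi=x^2\pm\sqrt2x+1$, these are not products of cyclotomic polynomials) --- positivity follows simply because the roots of $\Phi$ lie on the unit circle, so they pair off into conjugates or equal $\pm1$, and $q>1$.
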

\begin{proof}
Since   $F^{*d\eta}$  acts  as  $q^{d\eta}$  on  $X:=X(\bS)$,  the  minimal
polynomial $P$ of $F^*$ divides $x^{d\eta}-q^{d\eta}$.

The polynomial $P$ is irreducible over $\BZ$, otherwise a proper nontrivial
factor $P_1$ defines an $F^*$-stable pure proper non-trivial sublattice
$\Ker(P_1(F^*))$ of $X$, which contradicts $F$-indecomposability of $\bS$.

It  follows that $X$ is a $\BZ[x]/P$-module by making $x$ act by $F^*$, and
$X\otimes\BQ[x]/P$  is a one-dimensional $\BQ[x]/P$-vector space, otherwise
a  proper nontrivial subspace would  define an $F^*$-stable pure sublattice
of  $X$. It follows that  $\dim\bS=\deg P=\dim X$ and  thus $P$ is also the
characteristic polynomial of $F^*$.

We have in $\BZ[x]$ the equality
$x^{d\eta}-q^{d\eta}=\prod_{d'|d}(q^{\eta\deg\Phi_{d'}}
\Phi_{d'}(x^\eta/q^\eta))$.  Since $P$ is irreducible it divides one of the
factors,  and since $d\eta$ is minimal such that $F^{*d\eta}=q^{d\eta}\Id$,
that  is minimal such that $P$  divides $x^{d\eta}-q^{d\eta}$, we have that
$P$    divides    $q^{\eta\deg\Phi_d}\Phi_d(x^\eta/q^\eta)$;   equivalently
$\Phi=q^{-\deg P}P(qx)$ divides $\Phi_d(x^\eta)$.

We have $\card\SF=\card{\Irr(\SF)}=\card{X/(F^*-1)X}=
\det(F^*-1)=(-1)^{\deg  P}P(1)=(-q)^{\deg\Phi}\Phi(1/q)$  where  the second
equality  reflects  the  well  known  group  isomorphism $\Irr(\bS^F)\simeq
X/(F^*-1)X$ and the third is a general property of lattices. Finally, since
$\Phi$  is  real  and  divides  $\Phi_d(x^\eta)$, its  roots are stable under
taking inverses,  thus $(-q)^{\deg\Phi}\Phi(1/q)=\Phi(q)$.
\end{proof}

We  call {\em  $q$-cyclotomic} the polynomials
$\Phi$  of Lemma \ref{F-indec}.  In other terms
\begin{definition}\label{qcyclo}
For  $q$ as in \ref{finite reductive}, where $q^\eta$ is the smallest power
of   $q$   in   $\BZ$,   we   call  {\em  $q$-cyclotomic}  the  monic
polynomials $\Phi\in\BZ[x,q\inv]$    such   that   
$q^{\deg    \Phi}\Phi(x/q)$   is   a
$\BZ[x]$-irreducible factor of some $x^{d\eta}-q^{d\eta}$.
\end{definition}
In   the  study   of  semisimple   reductive  groups   we  will  need  the
$q$-cyclotomic  polynomials of Lemma \ref{delta-cyclo}. Note that if $d$ is
minimal   in  Definition   \ref{qcyclo},  then   $\Phi$  is   a  factor  in
$\BZ[x,q\inv]$  of $\Phi_d(x^\eta)$. We  are interested in  that number $d$
rather  than $d\eta$, and to emphasize this we write $\Phi_{\eta,d}$ in the
following examples.
\begin{lemma}\label{delta-cyclo}
When $q\in\BZ$, the $q$-cyclotomic polynomials are the cyclotomic polynomials.

When  $q$  is  an  odd  power  of  $\sqrt 2$, the following polynomials are
$q$-cyclotomic:  $\Phi_{2,1}(x):=\Phi_1(x^2)$,  $\Phi_{2,2}(x):=\Phi_2(x^2)$, 
$\Phi_{2,6}(x):=\Phi_6(x^2)$,
the   factors   $\Phi'_{2,4}:=x^2+\sqrt 2x+1$   and   $\Phi''_{2,
4}:=x^2-\sqrt 2x+1$   of   $\Phi_4(x^2)$,   and   the  factors  $\Phi'_{2,
12}:=x^4+x^3\sqrt 2+x^2+x\sqrt2+1$ and $\Phi''_{2,
12}:=x^4-x^3\sqrt 2+x^2-x\sqrt2+1$ of $\Phi_{12}(x^2)$.

When  $q$  is  an  odd  power  of $\sqrt 3$, the following polynomials are
$q$-cyclotomic:  $\Phi_{2,1}(x)$,  $\Phi_{2,2}(x)$  and the factors
$\Phi'_{2,6}:=x^2+x\sqrt3+1$ and $\Phi''_{2,6}:=x^2-x\sqrt3+1$ of 
$\Phi_{6}(x^2)$.
\end{lemma}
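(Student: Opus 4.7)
Unwinding Definition~\ref{qcyclo}, proving that $\Phi$ is $q$-cyclotomic amounts to checking that $P(x):=q^{\deg\Phi}\Phi(x/q)$ is a monic, $\BZ[x]$-irreducible factor of $x^{d\eta}-q^{d\eta}$ for some $d$; equivalently, $P$ is the minimal polynomial over $\BQ$ of some element $q\zeta$ where $\zeta$ is a root of unity. For each polynomial listed in the lemma I will verify three things: (i) $P\in\BZ[x]$; (ii) $P$ divides $x^{d\eta}-q^{d\eta}$ for the appropriate $d$; (iii) $P$ is irreducible in $\BZ[x]$.

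Conditions (i) and (ii) are essentially bookkeeping. Integrality uses only $q^\eta\in\BZ$ together with the identities $q\sqrt2=2^{k+1}$ when $q^2=2^{2k+1}$ and $q\sqrt3=3^{k+1}$ when $q^2=3^{2k+1}$; substituting into each listed $\Phi$ yields integer coefficients. Divisibility follows from the factorisation $x^{d\eta}-q^{d\eta}=\prod_{d'\mid d}q^{\eta\varphi(d')}\Phi_{d'}(x^\eta/q^\eta)$ established in the proof of Lemma~\ref{F-indec}, supplemented by direct expansion of the subfactorisations $\Phi_4(x^2)=\Phi'_{2,4}\Phi''_{2,4}$, $\Phi_{12}(x^2)=\Phi'_{2,12}\Phi''_{2,12}$ and $\Phi_6(x^2)=\Phi'_{2,6}\Phi''_{2,6}$ asserted in the statement.

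The substantive step is (iii). My plan is to compute $[\BQ(q\zeta):\BQ]$ for a root $q\zeta$ of $P$ and match it against $\deg P$: agreement forces $P$ to be the minimal polynomial of $q\zeta$, hence irreducible. When $q\in\BZ$ one has $\BQ(q\zeta)=\BQ(\zeta)$, giving $\deg P=\varphi(d)$ and $\Phi=\Phi_d$, which settles the first assertion (and its converse, since any $\BZ[x]$-irreducible factor of $x^d-q^d$ is forced to be one of the $q^{\varphi(d')}\Phi_{d'}(x/q)$ by matching degrees). For $\eta=2$ the analysis splits depending on whether $q\in\BQ(\zeta)$: by the standard description of the quadratic subfields of $\BQ(\zeta_n)$, one has $\sqrt2\in\BQ(\zeta_n)$ iff $8\mid n$ and $\sqrt3\in\BQ(\zeta_n)$ iff $12\mid n$. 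In the generic regime $q\notin\BQ(\zeta)$ (covering $\Phi_{2,1},\Phi_{2,2},\Phi_{2,6}$ in the $\sqrt2$ case and $\Phi_{2,1},\Phi_{2,2}$ in the $\sqrt3$ case) I would argue that $[\BQ(q\zeta):\BQ((q\zeta)^2)]=2$ and $\BQ((q\zeta)^2)=\BQ(\zeta^2)$ has degree $\varphi(d)$, yielding $[\BQ(q\zeta):\BQ]=2\varphi(d)=\deg P$.

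The main obstacle is the degenerate regime $q\in\BQ(\zeta)$, i.e.\ $\zeta$ of order $8$ or $24$ for $q^2=2^{2k+1}$ and of order $12$ for $q^2=3^{2k+1}$. Here the minimal polynomial of $q\zeta$ has degree strictly less than $\varphi(\mathrm{ord}(\zeta))$, and this drop in degree is realised by the splittings $\Phi_4(x^2)=\Phi'_{2,4}\Phi''_{2,4}$, $\Phi_{12}(x^2)=\Phi'_{2,12}\Phi''_{2,12}$ and $\Phi_6(x^2)=\Phi'_{2,6}\Phi''_{2,6}$. I will handle this by an explicit computation inside $\Gal(\BQ(\zeta)/\BQ)=(\BZ/n)^\times$: an automorphism $\sigma_a$ fixes $q\zeta$ iff $\sigma_a(q)\zeta^a=q\zeta$, and using the known action of $\sigma_a$ on $\sqrt2$ (resp.\ $\sqrt3$), determined by $a\bmod 8$ (resp.\ $a\bmod 12$), one finds in each degenerate case a stabiliser of order $2$ and hence two Galois orbits of size $\varphi(n)/2$. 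Computing the elementary symmetric functions of each orbit yields two monic polynomials with coefficients in $\BZ[\sqrt2]$ (resp.\ $\BZ[\sqrt3]$) which, after the substitution $x\mapsto x/q$ and multiplication by $q^{\deg\Phi}$, match the stated $\Phi'_{2,d}$ and $\Phi''_{2,d}$ on the nose; this simultaneously verifies irreducibility and identifies the factors.
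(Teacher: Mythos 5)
Your proposal is correct and takes essentially the paper's route: for $q\in\BZ$ the paper makes the same scaling observation (the map $P\mapsto q^{-\deg P}P(qx)$ gives a bijection between $\BZ[x]$-irreducible factors of $x^d-q^d$ and of $x^d-1$, i.e.\ cyclotomic polynomials), which is just another phrasing of your $\BQ(q\zeta)=\BQ(\zeta)$ minimal-polynomial argument. For the $\eta=2$ cases the paper reduces, exactly as you do, to checking for each listed $\Phi$ that $q^{\deg\Phi}\Phi(x/q)$ lies in $\BZ[x]$ and is irreducible, leaving the (finite, explicit) verification to the reader; your Galois-theoretic stabiliser computations are a valid, if somewhat heavier, way of carrying out that check.
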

\begin{proof}
When  $q\in\BZ$  the  formula  $P\mapsto  q^{-\deg  P} P(qx)$ establishes a
bijection   between   $\BZ[x]$-irreducible   factors   of   $x^d-q^d$   and
$\BZ[x]$-irreducible  factors of  $x^d-1$, that  is cyclotomic polynomials,
which gives the first case of the lemma.

For the other cases, we have to check for each given $\Phi$ that
$q^{\deg \Phi}\Phi(x/q)$ is in $\BZ[x]$ and irreducible.
\end{proof}

\begin{proposition}\label{q-indec}
Let $\bS, \eta, d, \Phi$ be as in \ref{F-indec} and let
$P=q^{\deg\Phi}\Phi(x^\eta/q^\eta)$  be  the  characteristic  polynomial of
$F^*$.  
\begin{enumerate}
\item Assume  that  either  $q\in\BZ$  or  that
$\BZ[x,q^{-\eta}]/P$ is integrally closed.  Then $\SF\simeq\BZ/\Phi(q)$.
\item  Let $m$ be a divisor of  $\Phi(q)$, and assume either that $d\in\{1,2\}$
and
$q\in\BZ$  or  that  $m$  is  prime  to  $d\eta$;  then  we  have a natural
isomorphism $\Irr(\SF)/m\Irr(\SF)\simeq \Ker(F^*-1\mid X(\bS)/m X(\bS))$.
\end{enumerate}
\end{proposition}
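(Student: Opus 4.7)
\emph{Proof sketch.} The plan is to view $X:=X(\bS)$ as a torsion-free rank-one module over the order $\BZ[T]/P$ (with $T$ acting as $F^*$), invert $q^\eta$ to land in a Dedekind overring, and exploit invertibility of the resulting module. Throughout we use $\SF\simeq\Irr(\SF)$ by Pontryagin duality together with the isomorphism $\Irr(\SF)\simeq X/(F^*-1)X$ from Lemma \ref{F-indec}.

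The key observation is that $F^*\equiv 1$ modulo $(F^*-1)$, so $(F^*)^{d\eta}=q^{d\eta}$ also acts as the identity on $X/(F^*-1)X$, making $q^\eta$ a unit there. Thus $X/(F^*-1)X\simeq X'/(F^*-1)X'$, where $X':=X\otimes_\BZ\BZ[q^{-\eta}]$ is a module over $R:=\BZ[q^{-\eta}][T]/P$. Under either hypothesis of (1) the ring $R$ is Dedekind: when $q\in\BZ$, $R$ identifies with the localisation $\BZ[1/q,\zeta_d]$ of the Dedekind ring $\BZ[\zeta_d]$; otherwise this is the assumption. Hence $X'$ is an invertible $R$-module, so
\[
X'/(T-1)X'\simeq R/(T-1)R=\BZ[q^{-\eta}]/(P(1)).
\]
By Lemma \ref{F-indec}, $|P(1)|=\Phi(q)=\card\SF$, which is coprime to $p$ (being the order of a finite subgroup of a torus), hence to the $p$-power $q^\eta$; so this quotient equals $\BZ/\Phi(q)\BZ$, and we conclude $\SF\simeq\BZ/\Phi(q)$, proving (1).

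For (2) when $\gcd(m,d\eta)=1$: from $m\mid\Phi(q)$ we have $\gcd(m,q^\eta)=1$, and since $P$ divides $T^{d\eta}-q^{d\eta}$ whose derivative $d\eta\,T^{d\eta-1}$ is coprime to it modulo $m$, the quotient $R/mR\simeq(\BZ/m)[T]/P$ is étale, a product $\prod_i K_i$ of fields on which $T$ acts as scalars $t_i$. Invertibility of $X'$ and triviality of the Picard group of the semilocal ring $R/mR$ force $X/mX\simeq R/mR$ as $R/mR$-modules. On such a product, both $\Ker(T-1)$ and $R/(T-1)R$ coincide with $\bigoplus_{t_i=1}K_i$, and the canonical composition $\Ker(T-1)\hookrightarrow R/mR\twoheadrightarrow R/(T-1)R$ is the identity on each such factor, hence an isomorphism; transporting along the above gives the desired natural isomorphism. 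In the remaining case $d\in\{1,2\}$ with $q\in\BZ$, $\deg P=1$, so $X=\BZ$ and $F^*$ acts as $\pm q$ with $F^*-1\equiv 0\pmod m$ (since $m\mid\Phi_d(q)=q\mp 1$); both the kernel and cokernel of $F^*-1$ on $\BZ/m$ equal $\BZ/m$, and the canonical composition is again the identity.

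The main obstacle is arranging the separability of $P\bmod m$ and the module isomorphism $X/mX\simeq R/mR$ in the étale case of (2); once these are in place, the self-duality of kernel and cokernel of $T-1$ in an étale $\BZ/m$-algebra is immediate, and the cyclicity in (1) is comparatively formal given the Dedekind/invertible-module framework.
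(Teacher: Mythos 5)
Your part (1) is correct and follows essentially the paper's route: invert $q^\eta$, identify $X\otimes\BZ[q^{-\eta}]$ with a rank-one module (a fractional ideal) over the Dedekind ring $\BZ[x,q^{-\eta}]/P$, and deduce cyclicity of the coinvariants; the paper finishes by an exponent-equals-norm argument where you quote $\fI/(x-1)\fI\simeq A/(x-1)A$, but these are interchangeable standard facts.

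Part (2) has a genuine gap at the step where the real content sits. You claim that $(\BZ/m)[T]/P$ is a product of fields because it is \'etale over $\BZ/m$. That is false when $m$ is not squarefree: an \'etale $\BZ/m$-algebra is a product of local Galois rings (e.g.\ $\BZ/4$ itself), not of fields, and the proposition must cover such $m$ --- indeed it is applied later with $m=4$ when $\ell=2$. In a non-field local factor the image $t_i$ of $T$ could a priori satisfy $t_i\equiv 1$ modulo the maximal ideal while $t_i\ne 1$; then $t_i-1$ is a nonzero zero-divisor, the kernel and cokernel of $T-1$ on that factor neither vanish nor map isomorphically to each other, and your composite $\Ker(T-1)\hookrightarrow R/mR\twoheadrightarrow R/(T-1)R$ fails to be an isomorphism. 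Ruling this out needs the hypothesis $m\mid P(1)$ together with separability at the point $1$: writing $P=(x-1)U+P(1)$ with $U\in\BZ[x]$, so $P\equiv(x-1)U\pmod m$, the unit-ness of $\mathrm{disc}(P)$ mod $m$ forces $\mathrm{Res}(x-1,U)=U(1)=P'(1)$ to be a unit mod $m$, hence $x-1$ and $U$ are comaximal mod $m$ and in each local factor $t_i-1$ is either $0$ or a unit. This is exactly the discriminant/Bezout computation in the paper's proof, which your sketch replaces by the incorrect ``product of fields''. Two further remarks: once the ring-level idempotent decomposition is in hand it applies to the module $X/mX$ directly, so the isomorphism $X/mX\simeq R/mR$ --- and with it the invertibility of $X'$ --- is not needed for (2); this matters because (2) does not assume the integral-closure hypothesis of (1), so in the $\eta=2$ case your appeal to invertibility of $X'$ is not licensed by (2)'s hypotheses, whereas the paper's argument (and the repaired idempotent argument) works for an arbitrary $\BZ[x]/P$-module.
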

\begin{proof}
Proceeding  as  in  the  proof  of  Lemma  \ref{F-indec}  we  set $X=X(\bS)$ and
$\Xbar=X/(F^*-1)X\simeq\Irr(\SF)$. Letting $x$ act as $F^*$ makes $X$ into
a   $\BZ[x]/P$-module,   and   $\Xbar$   a  $\BZ[x]/(P,x-1)$-module.  Since
$\BZ[x]/(P,x-1)=\BZ/P(1)=\BZ/\Phi(q)$  we find that  the exponent of
$\Xbar$ divides $\Phi(q)$.

Let $A:=\BZ[x,q^{-\eta}]/P$. The extension $\BZ[x]/P\hookrightarrow A/P$ is
flat    thus    $\Xbar\otimes_{\BZ[x]/P}    A\simeq   X'/(F^*-1)X'$   where
$X'=X\otimes_{\BZ[x]/P}  A$;  and  since  the  exponent  of $\Xbar$ divides
$\Phi(q)$ which is prime to $q^\eta$, we have
$\Xbar\simeq\Xbar\otimes_{\BZ[x]/P}A$.  Under the assumptions of (1)
the ring $A$  is Dedekind: if
$\eta\ne 1$ then $A$  is integrally closed thus  Dedekind; 
if $\eta=1$ then $A\simeq\BZ[x,q\inv]/\Phi_d$ where
the  isomorphism is  given by  $x\mapsto x/q$,  and is  a localization  of the
Dedekind  ring $\BZ[x]/\Phi_d$ by $q$. Thus $X'$ identifies to a fractional
ideal $\fI$ of $A$ and $\Xbar\simeq\fI/(x-1)\fI$. If $e$ is the exponent of
$\Xbar$  we  have  thus  $e\fI\subset(x-1)\fI$,  which  implies  that $x-1$
divides  $e$  in  $A$.  This  in  turn  implies  that  the norm $(-1)^{\deg
P}P(1)=\Phi(q)$  of  $(x-1)$  divides  $e$  in  $\BZ$, thus $e=\Phi(q)$ and
$\Xbar\simeq\BZ/\Phi(q)$  and  the  same  isomorphism  holds  for  the dual
abelian group $\SF$.

For  (2), note that by construction $\Xbar/m \Xbar$ is the biggest quotient
of  $X$ on which both  $F^*-1$ and the multiplication  by $m$ vanish. It is
thus  equal to the biggest  quotient of $X/m X$  on which $F^*-1$ vanishes.
Thus the question is to see that $\Ker(F^*-1)$ has a complement in $X/m X$.

If  $q\in\BZ$ and $d\in\{1,2\}$ we have $P=x\pm q$ so $X\simeq\BZ$ on which
$F^*$ acts by $\mp q$ and $\Xbar=X/(q\pm1)$ of which $X/m X$ is a quotient,
so  $F^*-1$ vanishes on $X/m X$ which  is thus equal to $\Xbar/m \Xbar$ and
there is nothing to prove.

Assume  now $m$  prime to  $d\eta$. There  exists $R\in\BZ[x]$ such that in
$\BZ[x]$    we   have   $P=(x-1)R+P(1)$.   Taking   derivatives,   we   get
$P'=(x-1)R'+R$,  whence $R(1)=P'(1)$.  Let $\delta$  be the discriminant of
$P$;  we can  find polynomials  $M,N\in \BZ[x]$  such that $MP+NP'=\delta$,
which  evaluating at  $1$ gives  $M(1)P(1)+N(1)P'(1)=\delta$. Since  $q$ is
prime to $P(1)$, thus to $m$, and $\delta$ is a divisor of the discriminant
of  $X^{d\eta}-q^{d\eta}$,  equal  to  $q^{d\eta(d\eta-1)}(d\eta)^{d\eta}$,
thus prime to $m$, we find that $P'(1)$ is prime to $m$. In $(\BZ/m)[x]$ we
have  $P=(x-1)R$, thus  applied to  $F^*$ we  get that  on $X/m  X$ we have
$0=P(F^*)=(F^*-1)R(F^*)$,  whence  $\Ker(F^*-1)+\Ker(R(F^*))=X/m  X$. Since
$R(1)$  is prime to $m$, we can  write $1\equiv Q(x-1)+ aR$ in $(\BZ/m)[x]$
for  some $Q\in(\BZ/m)[x]$  and $a$  the inverse  $\pmod m$ of $R(1)$. This
proves that $\Ker(F^*-1)\cap\Ker(R(F^*))=0$ thus $X/ m X$ is the direct sum
of $\Ker(F^*-1)$ and $\Ker(R(F^*))$ q.e.d.
\end{proof}
\subsection*{Complex reflection cosets}
(1) to (3) below are classical results of Springer and Lehrer.
\begin{proposition}\label{coset}
Let $V$ be a finite dimensional vector space over a subfield $k$ of $\BC$, 
let $W\subset\GL(V)$ be a finite complex reflection group
and let $\phi\in N_{\GL(V)}(W)$, so that $W\phi$ is a reflection coset; let
$(d_1,\eps_1),\ldots,(d_n,\eps_n)$ be its generalized degrees (see for instance \cite[4.2]{B}).
For
$\zeta$ a root of unity define $a(\zeta)$ as the multiset of the $d_i$ such
that $\zeta^{d_i}=\eps_i$. Then:
\begin{enumerate}
\item  For any root of unity $\zeta$,  
the maximum dimension when $w\phi$ runs over $W\phi$ of
a $\zeta$-eigenspace of $w\phi$ on $V\otimes_k k[\zeta]$
is $\card{a(\zeta)}$.
\item 
For $w\phi\in W\phi$ denote $V_{w,\zeta}\subset V\otimes_k k[\zeta]$ its
$\zeta$-eigenspace. Assume $\dim V_{w,\zeta}=\card{a(\zeta)}$
and let $C=C_W(V_{w,\zeta})$ and 
$N=N_W(V_{w,\zeta})$. Then $N/C$ is a complex reflection group acting on
$V_{w,\zeta}$, with reflection degrees $a(\zeta)$.
\item  Any two subspaces $V_{w,\zeta}$ and $V_{w',\zeta}$ of dimension
$\card{a(\zeta)}$ are $W$-conjugate.
\item  For $w\phi$ as in (2) the natural actions of $w\phi$ on 
$N$ and $C$ induce the trivial action on $N/C$.
\item Let $a\in\BZ$ be such that $(W\phi)^a=W\phi$ and such that $\zeta$ and
$\zeta^a$ are conjugate by $\Gal(k[\zeta]/k)$.
Then for $w\phi$ as in (2) there exists $v\in N_W(N)\cap N_W(C)$ which 
conjugates $w\phi C$ to $(w\phi)^aC$.
\end{enumerate}.
\end{proposition}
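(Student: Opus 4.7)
Parts (1), (2), (3) are due to Springer and Lehrer in the setting of reflection cosets; I would simply cite \cite{B}, where they are proved via graded Molien-type formulas for the multiplicities of $\zeta$-eigenspaces of elements of $W\phi$.

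For (4), the argument is a direct computation exploiting that $w\phi$ acts on $V_{w,\zeta}$ as the scalar $\zeta$. Indeed, for $n\in N$ and $u\in V_{w,\zeta}$ we have $n(u)\in V_{w,\zeta}$, whence
\[
(w\phi)\,n\,(w\phi)^{-1}(u)=\zeta^{-1}(w\phi)(n(u))=\zeta^{-1}\cdot\zeta\,n(u)=n(u).
\]
Thus $(w\phi)n(w\phi)^{-1}$ agrees with $n$ on $V_{w,\zeta}$, i.e.\ $[w\phi,n]\in C$; the statement for $C$ is immediate.

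For (5), the plan is a Galois-descent argument. Choose $\sigma\in\Gal(k[\zeta]/k)$ with $\sigma(\zeta)=\zeta^a$, extended $k$-semilinearly to $V\otimes_kk[\zeta]$ so that it commutes with the $k$-linear action of $W\phi$. Since the $\eps_i$ lie in $k$ and are fixed by $\sigma$, one gets $a(\zeta)=a(\zeta^a)$ as multisets. Applying $\sigma$ to $V_{w,\zeta}$ produces the $\zeta^a$-eigenspace of $w\phi$, of maximum dimension $\card{a(\zeta^a)}$. Meanwhile $V_{w,\zeta}$ is contained in the $\zeta^a$-eigenspace of $(w\phi)^a\in W\phi$ (using $(W\phi)^a=W\phi$), which by (1) has dimension at most $\card{a(\zeta^a)}=\dim V_{w,\zeta}$, so the two coincide. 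Applying (3) with $\zeta^a$ in place of $\zeta$ produces $u\in W$ with $uV_{w,\zeta}=\sigma(V_{w,\zeta})$, and I set $v=u^{-1}$. Because $\sigma$ commutes with each $g\in W$ on $V\otimes_kk[\zeta]$, the equivalence $gV_{w,\zeta}=V_{w,\zeta}\Leftrightarrow g\sigma(V_{w,\zeta})=\sigma(V_{w,\zeta})$ and its analogue for pointwise fixing yield $N_W(\sigma(V_{w,\zeta}))=N$ and $C_W(\sigma(V_{w,\zeta}))=C$, hence $u\in N_W(N)\cap N_W(C)$. Finally, both $u^{-1}(w\phi)u$ and $(w\phi)^a$ restrict to $\zeta^a\cdot\Id$ on $V_{w,\zeta}$ (the first because $w\phi$ acts on $\sigma(V_{w,\zeta})=uV_{w,\zeta}$ as $\zeta^a$); since both lie in $W\phi$, their quotient lies in $W\cap N\cap C_W(V_{w,\zeta})=C$, giving $v(w\phi)v^{-1}\in(w\phi)^aC$ as required.

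The main obstacle is the final verification that $u$ can be arranged to normalize $N$ and $C$: a priori $u$ is only specified by $uV_{w,\zeta}=\sigma(V_{w,\zeta})$, and the essential structural input allowing one to conclude is the commutation of $\sigma$ with the $k$-linear action of $W$, i.e.\ exactly the Galois hypothesis on $a$.
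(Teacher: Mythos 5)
Your proposal is correct and takes essentially the same route as the paper: (1)--(3) are quoted from Brou\'e, (4) is the same one-line computation using that $w\phi$ acts as the scalar $\zeta$ on $V_{w,\zeta}$, and (5) is the paper's Galois argument in mirror image (the paper transports $V_{w,\zeta}$ by $\sigma\inv$ and identifies it with the $\zeta$-eigenspace of $(w\phi)^a$, you transport by $\sigma$ and compare $\zeta^a$-eigenspaces, then invoke (3) and the same coset computation). The only quibble is your justification of $\card{a(\zeta)}=\card{a(\zeta^a)}$: the $\eps_i$ need not lie in $k$ (for $\lexp 3D_4$ over $k=\BQ$ they are primitive cube roots of unity), but the multiset of pairs $(d_i,\eps_i)$ is $\Gal(k[\zeta]/k)$-stable --- or, more directly, $\sigma$ carries the $\zeta$-eigenspace of any element of $W\phi$ bijectively onto its $\zeta^a$-eigenspace, so the maxima in (1) agree --- hence the equality of cardinalities, which is all you use, still holds.
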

\begin{proof}
For (1) see for instance \cite[5.2]{B}, for (2) see \cite[5.6(3) and (4)]{B}
and for (3) see \cite[5.6 (1)]{B}. (4) results from the observation that
if $n\in N$ and $v\in V_{w,\zeta}$ then
$(n\inv\cdot\lexp{w\phi}n)(v)=(n\inv w\phi n (w\phi)\inv)(v)=
(n\inv w\phi n)(\zeta\inv v)= (n\inv w\phi)(\zeta\inv n(v))=
(n\inv)(n(v))=v$ thus 
$n\inv\cdot\lexp{w\phi}n\in C$.

For (5),
$\Gal(k[\zeta]/k)$ acts naturally on $V\otimes_k k[\zeta]$, commuting with
$\GL(V)$, in particular with $W$ and $\phi$. If $\sigma\in\Gal(k[\zeta]/k)$
is such that $\sigma(\zeta)=\zeta^a$, let $\zeta^{a'}=\sigma\inv(\zeta)$.
Then $\sigma\inv(V_{w,\zeta})=V_{w,\zeta^{a'}}$.
It follows that $N=N_W(V_{w,\zeta^{a'}})$ and
$C=C_W(V_{w,\zeta^{a'}})$.

Now  since $a'$ is the inverse of $a$ modulo the order of $\zeta$ the space
$V_{w,\zeta^{a'}}$  is the $\zeta$-eigenspace of $(w\phi)^a$. By assumption
we  have  $(w\phi)^a\in  W\phi$.  Since  two maximal $\zeta$-eigenspaces of
elements  of  $W\phi$  are  conjugate  by  (3)  there exists $v\in W$ which
conjugates   $V_{w,\zeta}$  to  $V_{w,\zeta^{a'}}$,  and  $v\in  N_W(N)\cap
N_W(C)$  since $N=N_W(V_{w,\zeta^{a'}})$ and $C=C_W(V_{w,\zeta^{a'}})$. The
element  $v$  thus  conjugates  the  set  $w\phi  C$ of elements which have
$V_{w,\zeta}$  as $\zeta$-eigenspace to  the set $(w\phi)^a  C$ of elements
which have $V_{w,\zeta^{a'}}$ as $\zeta$-eigenspace.
\end{proof}

\subsection*{Generic Sylow subgroups}

We define the Sylow $\Phi$-subtori of $(\bG,F)$, first in the case when $\bG$
is quasi-simple, then in the case of descent of scalars.

From now on we assume $\bG$ semisimple. Then, if
$(d_1,\eps_1),\ldots,(d_n,\eps_n)$  are  the  generalized  degrees  of  the
reflection coset $W\phi$, we have (see \cite[11.16]{st})
\begin{equation}\label{order}
\card\GF=q^{\sum_i(d_i-1)}\prod_i (q^{d_i}-\eps_i).
\end{equation}

\begin{proposition}\label{sylows}
Let  $\bG$ be  as in  \ref{finite reductive}  and quasi-simple. Then we can
rewrite the order formula \ref{order} for $\card\GF$ as
\begin{equation}\label{orderbis}
\card\GF=q^{\sum_i(d_i-1)}\prod_{\Phi\in\cP}\Phi(q)^{n_\Phi}
\end{equation}
where  $\cP$  is  a  set  of  $q$-cyclotomic  polynomials,  and where $0\ne
n_\Phi=\card{a(\zeta)}$  (see \ref{coset}) for any root $\zeta$ of $\Phi$.
For  each  $\Phi\in\cP$  there  exists  a  non-trivial  $F$-stable subtorus
$\bS_\Phi$ of $\bG$ such that $\card{\bS_\Phi^F}=\Phi(q)^{n_\Phi}$.
\end{proposition}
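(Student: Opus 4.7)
The plan is to prove the two claims separately. For the rewriting of the order formula \eqref{order} as \eqref{orderbis}, I would begin from $q^{d_i}-\eps_i=\prod_{\zeta:\zeta^{d_i}=\eps_i}(q-\zeta)$ in $\overline{\BQ}$, and group the linear factors according to the $\BZ[x]$-irreducible factors of $\prod_\zeta(x-q\zeta)$. Each such factor has the form $P=q^{\deg\Phi}\Phi(x/q)$ for some $q$-cyclotomic $\Phi$ in the sense of Definition \ref{qcyclo} (one may take the ambient polynomial $x^{d\eta}-q^{d\eta}$ with $d\eta$ equal to the lcm of the orders of the roots in the group), and $\prod_\zeta(q-\zeta)=\Phi(q)$ for those $\zeta$. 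To pin down the multiplicity $n_\Phi=|a(\zeta)|$, fix a root $\zeta$ of $\Phi$ and observe that each $i$ with $\zeta^{d_i}=\eps_i$ contributes one copy of $\Phi(q)$ to the product; this uses that the relation $\zeta^{d_i}=\eps_i$ is preserved under the grouping (which holds once one checks that $\eps_i\in\BQ$ and that in the cases $q\not\in\BZ$ the degrees $d_i$ are multiples of $\eta$).

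For part (2), fix $\Phi\in\cP$ and a root $\zeta$ of $\Phi$. By Proposition \ref{coset}(1), choose $w\phi\in W\phi$ with $\dim V_{w,\zeta}=n_\Phi$, and let $\bT_w$ be the $F$-stable maximal torus obtained by twisting $\bT$ by $w$ via Lang--Steinberg; under the natural identification $X(\bT_w)\simeq X(\bT)$, the map $F^*$ on $X(\bT_w)$ corresponds to $qw\phi$. Since $w\phi$ has finite order and is therefore semisimple, the multiplicity of the eigenvalue $q\zeta$ in $\det(xI-F^*)$ equals $\dim V_{w,\zeta}=n_\Phi$; because $P:=q^{\deg\Phi}\Phi(x/q)$ is $\BZ[x]$-irreducible by Definition \ref{qcyclo}, its multiplicity as a polynomial factor of $\det(xI-F^*)$ is likewise $n_\Phi$.

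Now decompose $\bT_w$ as an almost direct product of $F$-indecomposable subtori $\bS_1,\ldots,\bS_r$ using the first proposition of this section. By Lemma \ref{F-indec} each $\bS_j$ is associated to a $q$-cyclotomic polynomial $\Phi_j$, namely the one with $\det(xI-F^*|_{X(\bS_j)})=q^{\deg\Phi_j}\Phi_j(x/q)$; these characteristic polynomials multiply to $\det(xI-F^*|_{X(\bT_w)})$, so exactly $n_\Phi$ of the indices $j$ satisfy $\Phi_j=\Phi$. Define $\bS_\Phi$ to be the almost direct product of these $n_\Phi$ subtori. Combining Lemma \ref{almost direct} with Lemma \ref{F-indec} yields $\card{\bS_\Phi^F}=\Phi(q)^{n_\Phi}$, and $\bS_\Phi$ is non-trivial because $n_\Phi>0$.

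The main obstacle lies in the Galois bookkeeping of step 1 for the Suzuki and Ree cases ($q\not\in\BZ$): one must verify that the grouping by $\BZ$-irreducible factors of $\prod_\zeta(x-q\zeta)$ indeed produces $q$-cyclotomic polynomials in the sense of \ref{qcyclo}, and that the condition $\zeta^{d_i}=\eps_i$ is compatible with the grouping so that the multiplicity count gives $|a(\zeta)|$ on the nose. Once these verifications are carried out, the construction of $\bS_\Phi$ is a formal consequence of Proposition \ref{coset}(1), the semisimplicity of $w\phi$, and the almost direct product decomposition of $F$-stable tori.
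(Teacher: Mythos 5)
Your construction of $\bS_\Phi$ is essentially the paper's argument: the paper also chooses $w$ via Proposition \ref{coset}(1)--(2), uses semisimplicity of $w\phi$ to conclude that $\Phi(x)^{n_\Phi}$ divides its characteristic polynomial, and takes the subtorus of $\bT_w$ given by the pure sublattice $\Ker\Phi(w\phi)$ of $X(\bT)$; your detour through the almost direct product of $F$-indecomposable subtori of $\bT_w$ produces the same subtorus and the same order count. The existence of a decomposition of the form \ref{orderbis} by grouping linear factors into $\BZ[x]$-irreducible blocks is also in the spirit of the paper (which, for $\eta=2$, simply tabulates the factorizations for $\lexp 2B_2$, $\lexp 2G_2$, $\lexp 2F_4$).

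The genuine gap is in your justification that the exponent equals $\card{a(\zeta)}$. You reduce it to the claim that the relation $\zeta^{d_i}=\eps_i$ is constant on the set of roots of a fixed $\Phi$, and you propose to verify this by checking that $\eps_i\in\BQ$. That premise is false for quasi-simple groups of type $\lexp 3D_4$, which are within the scope of the proposition: there the generalized degrees are $(2,1),(4,\omega),(4,\omega^2),(6,1)$ with $\omega$ a primitive cube root of unity, and for $\Phi=\Phi_{12}$ the relation $\zeta^4=\omega$ holds for only half of the primitive $12$-th roots of unity (the other half satisfy $\zeta^4=\omega^2$). So a single index $i$ does not contribute a full copy of $\Phi(q)$, contrary to your per-index count; here the two indices $(4,\omega)$ and $(4,\omega^2)$ together contribute one copy of $\Phi_{12}(q)$, via $(q^4-\omega)(q^4-\omega^2)=\Phi_3(q)\Phi_6(q)\Phi_{12}(q)$. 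You also located the difficulty in the wrong place: for the Suzuki and Ree groups ($q\notin\BZ$) all $\eps_i=\pm1$ and your condition does hold; the problem is the triality case with $q\in\BZ$. The repair is the paper's observation, which you nearly have: once some decomposition $\prod_i(x^{d_i}-\eps_i)=\prod_{\Phi'\in\cP}\Phi'(x)^{n_{\Phi'}}$ exists, compare the multiplicity of the single linear factor $x-\zeta$ on both sides; since the polynomials $q^{\deg\Phi'}\Phi'(x/q)$ are distinct $\BZ[x]$-irreducibles, $\zeta$ is a simple root of exactly one $\Phi'$, namely $\Phi$, and one gets $n_\Phi=\card{a(\zeta)}$ for every root $\zeta$ of $\Phi$ at once, with no need for the relation to be stable on Galois orbits index by index. (Alternatively, $\card{a(\zeta)}$ is constant on the roots of $\Phi$ because by Proposition \ref{coset}(1) it is the maximal dimension of a $\zeta$-eigenspace on the coset $W\phi$, whose elements have matrices with entries in $\BZ[q\inv]$, so Galois-conjugate eigenvalues give equal maximal eigenspace dimensions.)
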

We  note that if $\GF$  is a Ree or  Suzuki group, the $\eta$ of Definition
\ref{qcyclo} is $2$. Otherwise
$\eta=1$ and the $q$-cyclotomic polynomials are cyclotomic polynomials.

We  call any  $F$-stable torus  $\bS$ such  that $\card\SF$  is a  power of
$\Phi(q)$ a $\Phi$-torus, and tori $\bS_\Phi$ as above are called {\em
Sylow $\Phi$-subtori} of $(\bG,F)$ --- we abuse notation and call them
Sylow $\Phi$-subtori of $\bG$ when $F$ is clear from the context; 
they are the almost direct product of
$n_\phi$ $F$-indecomposable $\Phi$-tori.
\begin{proof}
Proposition \ref{sylows} is essentially in \cite{BM} but let us reprove it.

First,  we note  that assuming  $\card\GF$ has  a decomposition of the form
\ref{orderbis}, the value of $n_\Phi$ results from \ref{order}: let $\zeta$
be   any  root  of  $\Phi(x)$.  Then  $(x-\zeta)$  divides  $\Phi(x)$  with
multiplicity   one,  and  does  not   divide  any  another  $\Phi'(x)$  for
$\Phi'\in\cP$ since the $\Phi(x/q)$ are distinct irreducible polynomials in
$\BQ[x]$.  Thus $n_\Phi$  is the  number of  pairs $(d_i,\eps_i)$ such that
$x-\zeta$ divides $x^{d_i}-\eps_i$.

There is a decomposition of the form \ref{orderbis}:
if $\eta=1$ we get such a decomposition of $\card\GF$ 
by decomposing each term of \ref{order} into a product of cyclotomic
polynomials. Otherwise $\GF$ is a Ree  or Suzuki group, 
$\eta=2$ and $q$ is an odd power  of  $\sqrt  2$  or  $\sqrt  3$,  
and the  set $\cP$ and the decomposition of the form \ref{orderbis}
is given by what follows:
$$
\begin{array}{c|cc}
(\bG,F)&\card\GF&\text{generalized degrees of $W\phi$}\\
\hline
\lexp 2B_2(q^2)& q^4(\Phi_{2,1}\Phi'_{2, 4}\Phi''_{2, 4})(q)&
\{(2,1),(4,-1)\}\\
\lexp 2F_4(q^2)&q^{24}(\Phi_{2,1}^2\Phi_{2,2}^2
\Phi^{\prime2}_{2, 4}\Phi^{\prime\prime2}_{2,4}
\Phi_{2,6}\Phi'_{2, 12}\Phi''_{2, 12})(q)&
\{(2,1),(6,-1),(8,1),(12,-1)\}\\
\lexp 2G_2(q^2)& q^{6}(\Phi_{2,1}\Phi_{2,2}\Phi'_{2, 6}\Phi''_{2, 6})(q)&
\{(2,1),(6,-1)\}\\
\end{array}$$
Note  that for $\eta=2$ our ``$q$-cyclotomic  polynomials'' are the
``$(tp)$-cyclotomic  polynomials''  defined  in \cite[3.14]{BM}.

To construct the torus $\bS_\Phi$ for $\Phi\in\cP$, 
let us choose $\zeta$ a root of $\Phi$ and $w$ as in (2) of Proposition
\ref{coset}. Then if $\bT_w$ is a maximal torus of 
type $w$ with respect to $\bT$, so that $(\bT_w,F)\simeq(\bT,wF)$,
the characteristic polynomial of $w\phi$ on $X(\bT)$
has   $\Phi(x)^{n_\Phi}$  as   a  factor;  the   kernel  of
$\Phi(w\phi)$  on $X(\bT)$ is a pure  sublattice corresponding to a subtorus
$\bS_\Phi$ of $\bT_w$ such that $\card{\bS_\Phi^F}=\Phi(q)^{n_\Phi}$. 
\end{proof}

\begin{proposition}\label{sylow descent}
Let $(\bG,F)$ be as in \ref{finite reductive},
semisimple  and such  that the  Dynkin diagram  of $\bG$  has $n$ connected
components permuted transitively by $F$.
Then there exists a reductive group $\bG_1$ with isogeny $F_1$ such that
up to isomorphism  $\bG$ is  a  ``descent  of  scalars''
$\bG=\bG_1^n$  with $F(g_1,\ldots,g_n)=(g_2,\ldots,g_n,F_1(g_1))$.

Then $\GF\simeq\bG_1^{F_1}$, and if the scalar associated to $(\bG,F)$ is $q$
that associated to $(\bG_1,F_1)$ is $q_1:=q^n$.  
Thus we have
$\card\GF=q^{n\sum_i(d_i-1)}\prod_{\Phi\in\cP}\Phi(q^n)^{n_\Phi}$
where $d_i,\cP,n_\phi$ are as given by \ref{sylows} for $(\bG_1,F_1,q_1)$.

Here again, for $\Phi\in\cP$ there exists a Sylow $\Phi$-subtorus of $\bG$,
that is an $F$-stable subtorus
$\bS_\Phi$ such that $\card{\bS_\Phi^F}=\Phi(q^n)^{n_\Phi}$.
\end{proposition}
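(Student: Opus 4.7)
The plan is to exploit the transitive $F$-permutation of the $n$ Dynkin components to realise $(\bG,F)$ as a descent of scalars from a quasi-simple pair $(\bG_1,F_1)$, and then to transfer Proposition~\ref{sylows} for $(\bG_1,F_1,q^n)$ back to $\bG$.

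For the structural isomorphism, write $\bG$ as the almost direct product of its $n$ quasi-simple factors $\mathbf{H}_1,\ldots,\mathbf{H}_n$, one per Dynkin component. After relabeling, $F(\mathbf{H}_i)=\mathbf{H}_{i+1}$ for $i<n$ and $F(\mathbf{H}_n)=\mathbf{H}_1$. Set $\bG_1:=\mathbf{H}_1$ and $F_1:=F^n|_{\bG_1}$; this is an isogeny of $\bG_1$ some power of which is a Frobenius inherited from $F$. The restrictions $F^{i-1}:\bG_1\to\mathbf{H}_i$ are isomorphisms; using them to identify each $\mathbf{H}_i$ with $\bG_1$ realises $\bG$ as the almost direct product $\bG_1^n$ on which $F$ acts by $(g_1,\ldots,g_n)\mapsto(g_2,\ldots,g_n,F_1(g_1))$.

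For the fixed-point computation, Lemma~\ref{quotient by finite} gives $\card\GF=\card{(\bG_1^n)^F}$ after quotienting by the finite central kernel, and the equations defining $F$-invariance cascade to force $g_1=\cdots=g_n$ with $F_1(g_1)=g_1$, yielding $(\bG_1^n)^F\simeq\bG_1^{F_1}$. If $F^N$ is attached to an $\BF_{q^N}$-structure then $F_1^N=F^{nN}|_{\bG_1}$ is attached to an $\BF_{q^{nN}}=\BF_{q_1^N}$-structure, so $q_1=q^n$. Applying Proposition~\ref{sylows} to $(\bG_1,F_1,q_1)$ and substituting $q_1=q^n$ gives the order formula. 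For the Sylow $\Phi$-subtorus, the same proposition yields an $F_1$-stable subtorus $\bS_{\Phi,1}\subset\bG_1$ with $\card{\bS_{\Phi,1}^{F_1}}=\Phi(q^n)^{n_\Phi}$; then $\bS_{\Phi,1}^n\subset\bG_1^n$ is $F$-stable, and the same fixed-point cascade identifies $\bS_\Phi^F$ with $\bS_{\Phi,1}^{F_1}$, of the required order.

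The main technical obstacle is the first step: quasi-simple factors of a semisimple group may have nontrivial finite pairwise central intersections, so the identification $\bG\simeq\bG_1^n$ is literal only after a finite central quotient. Lemma~\ref{quotient by finite} makes this quotient invisible to order computations, and since the kernel is central it preserves tori and $F$-stability, so the transfer of the Sylow subtorus back to $\bG$ is harmless.
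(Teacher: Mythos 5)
Your proposal is correct in substance, and it reaches the stated conclusions by a route that differs from the paper's at the only non-trivial point, the existence of $\bS_\Phi$. The paper also treats the structural part ($\bG\simeq\bG_1^n$, $\GF\simeq\bG_1^{F_1}$, the order formula) as immediate, and devotes the actual proof to Lemma \ref{descent}: working in the character lattice of a maximal torus of type $(1,\ldots,1,w_1)$, it defines $\bS$ as the subtorus cut out by $\Ker(\Phi((w\phi)^n))$ and uses the summation map $\Sigma\colon X(\bT)\to X(\bT_1)$ to identify $\bS^{wF}$ with $\bS_1^{w_1F_1}$. That heavier argument buys more than Proposition \ref{sylow descent} needs, namely the identification $N_W(V_\zeta)/C_W(V_\zeta)\simeq N_{W_1}(V_{1,\zeta^n})/C_{W_1}(V_{1,\zeta^n})$ together with the compatibility of these actions on the two tori, which is precisely what is invoked later in Proposition \ref{gensylow} and Theorem \ref{Enguehard}. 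Your construction --- take the Sylow $\Phi$-subtorus $\bS_{\Phi,1}$ of $(\bG_1,F_1)$ furnished by Proposition \ref{sylows}, form the $F$-stable product of its $n$ translates, and use the fixed-point cascade to get $\bS_\Phi^F\simeq\bS_{\Phi,1}^{F_1}$ --- is more elementary and fully adequate for the proposition as stated, but it would not substitute for Lemma \ref{descent} downstream. Two small caveats: the restrictions $F^{i-1}\colon\mathbf{H}_1\to\mathbf{H}_i$ are isogenies (bijective on points) but in general not isomorphisms of algebraic groups, since they carry Frobenius twists; this does not damage your argument, because only groups of rational points and their orders are used (transport $\bS_{\Phi,1}$ by $F^{i-1}$ instead of ``identifying'' the factors, and the cascade computation is unchanged), though it means the literal presentation of $(\bG,F)$ in the standard shift form is the same mild abuse the paper makes by calling this part obvious. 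Your handling of the almost-direct-product issue through Lemma \ref{quotient by finite} is consistent with the paper's toolkit and is, if anything, spelled out more carefully than in the paper itself.
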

\begin{proof}
The proposition is obvious apart perhaps for the statement about the existence
of $\bS_\Phi$. This results in particular from the following lemma
that we need for future reference.

\begin{lemma}\label{descent}
In the situation of Proposition \ref{sylow descent},
let $(\bT,wF)$ where $\bT=\bT_1^n$ be a maximal torus of type 
$w=(1,\ldots,1,w_1)$ of $\bG$ and define $\phi$ on
$V=X(\bT)\otimes\BC$ (resp. $\phi_1$ on $V_1=X(\bT_1)\otimes\BC$) by
$F^*=q\phi$ (resp. $F_1^*=q_1\phi_1$).
Then if the characteristic polynomial of $w_1\phi_1$ is $P(x)$, 
that of $w\phi$ is $P(x^n)$.
Let $\Phi$ be a $q_1$-cyclotomic factor of $P$ (corresponding to a
$\BZ[x]$-irreducible factor of the characteristic polynomial of $w_1F_1^*$)
and let $\zeta$ be a root of $\Phi(x^n)$. Denote by $V_\zeta$ the
$\zeta$-eigenspace of $w\phi$ (resp. by $V_{1,\zeta^n}$ the
$\zeta^n$-eigenspace of $w_1\phi_1$). 

Let $\bS_1$ be the Sylow $\Phi$-subtorus of 
$(\bG_1,F_1)$ determined by $\Ker(\Phi(w_1\phi_1))$, and 
$\bS$ be the $wF$-stable subtorus of $\bT$ determined by 
$\Ker(\Phi((w\phi)^n))$.
Then $\bS$ is a Sylow $\Phi$-subtorus of 
$(\bG,F)$ and
$$N_W(V_\zeta)/C_W(V_\zeta)\simeq N_{W_1}(V_{1,\zeta^n})/C_{W_1}(V_{1,\zeta^n})
\simeq N_{\bG_1}(\bS_1)/C_{\bG_1}(\bS_1)
\simeq N_\bG(\bS)/C_\bG(\bS)$$
and we have an isomorphism $\bS^{wF}\simeq\bS_1^{w_1F_1}$ compatible with the 
actions
of $N_\bG(\bS)/C_\bG(\bS)$ and $N_{\bG_1}(\bS_1)/C_{\bG_1}(\bS_1)$
and the above isomorphism.
\end{lemma}
\begin{proof}
Let $X=X(\bT)$, $X_1=X(\bT_1)$. On $X\simeq X_1^n$ we have
$F^*(x_1,\ldots,x_n)=(x_2,\ldots,x_n,F_1^*(x_1))$, thus
$\phi(x_1,\ldots,x_n)=(q\inv x_2,\ldots,q\inv x_n,q_1q\inv x_1)$.
It follows by an easy computation that $V_\zeta$ is equal to the set of
$(x,(q\zeta) x,\ldots,(q\zeta)^{n-1} x)$ where
$x\in V_{1,\zeta^n}$, that
$C_W(V_\zeta)=\{(v_1,\ldots,v_n) \mid v_i\in C_{W_1}(V_{1,\zeta^n})\}$
and that $N_W(V_\zeta)=\{ (vv_1,\ldots,vv_n)\mid v\in
N_{W_1}(V_{1,\zeta^n}), v_i\in  C_{W_1}(V_{1,\zeta^n})\}$. This shows that  
$N_W(V_\zeta)/C_W(V_\zeta)\simeq N_{W_1}(V_{1,\zeta^n})/C_{W_1}(V_{1,\zeta^n})$.
Since when $\zeta$ runs over the roots of $\Phi(x^n)$ the $q_1\zeta^n$ are 
roots of the same $\BZ[x]$-irreducible
polynomial $q_1^{\deg\Phi}\Phi(x/q_1)$, the $\zeta^n$ are Galois conjugate thus
$C_{W_1}(V_{1,\zeta^n})$ (resp. $N_{W_1}(V_{1,\zeta^n})$) centralizes
(resp. normalizes) all the conjugate eigenspaces, whence our claim
that $N_{W_1}(V_{1,\zeta^n})/C_{W_1}(V_{1,\zeta^n})
\simeq N_{\bG_1}(\bS_1)/C_{\bG_1}(\bS_1)$.
Now $\Ker(\Phi((w\phi)^n))$ is the span of $V_\zeta$ for all roots $\zeta$
of $\Phi(x^n)$ and by the analysis above 
$C_W(V_\zeta)$ and $N_W(V_\zeta)$ are independent of $\zeta$, thus isomorphic
to $C_W(\bS)$ and $N_W(\bS)$.

We have the following commutative diagram
$$\begin{CD}
X@>{wF^*-1}>>X@>\Res>>\Irr(\bT^{wF})@>>>1\\
@VV\Sigma V @VV\Sigma V @VV\sim V\\
X_1@>{w_1F^*_1-1}>>X_1@>\Res>>\Irr(\bT_1^{w_1F_1})@>>>1\\
\end{CD}$$
where $\Sigma$ is the map $(x_1,\ldots,x_n)\mapsto x_1+\ldots+x_n$. Since we
have $\Sigma\circ (wF)^n=w_1F_1\circ\Sigma$, for any polynomial $Q$ the morphism
$\Sigma$ induces a surjective morphism $\Ker(Q((wF^*)^n))\to\Ker(Q(w_1F_1^*))$
whence for $Q=P$ a surjection $\Irr(\bS^{wF})\rightarrow\Irr(\bS_1^{w_1F_1})$; since
$\card{\bS^{wF}}$ is prime to $\card{\bT^{wF}/\bS^{wF}}$ this surjection must 
be an isomorphism.
Extended to $V=X\otimes\BC$, the map $\Sigma$ sends $V_\zeta$ to $V_{1,\zeta^n}$
and sends the action of $N_W(V_\zeta)/C_W(V_\zeta)$ to that
of $N_{W_1}(V_{1,\zeta^n})/C_{W_1}(V_{1,\zeta^n})$, whence the last
statement of the lemma.
\end{proof}
Note that any element of $W\phi$ is conjugate to an element of the form
$(1,\ldots,1,w_1)\phi_1$ so the form of $w$ in the statement of Lemma
\ref{descent} covers all the types of maximal tori.
\end{proof}

\begin{remark}
If  the generalized  degrees of  $W_1\phi_1$ are  $(d_i,\eps_i)_i$ those of
$W\phi$  are $(d_i,\eta_{i,j})$ where $\eta_{i,j}$ for $j\in\{1,\ldots,n\}$
runs  over the $n$-th  roots of $\eps_i$.  It follows that  $n_\Phi$ can be
defined   in  terms  of  $W\phi$ as it  is  also  the  number  of
$(d_i,\eta_{i,j})$ such that $\zeta^{d_i}=\eta_{i,j}$, where $\zeta$ is any
root of $\Phi(x^n)$.
\end{remark}
\begin{remark}
For $\Phi\in\cP(\bG)$, a  Sylow $\Phi$-subtorus of  $\bG$ is a ``power'' of a
subtorus   $\bS_0$  such  that   $\card{\bS_0^F}=\Phi(q)$.  If  $\bG$  is
quasi-simple, such a subtorus $\bS_0$ is $F$-indecomposable (since then the
polynomial  $\Phi$ is  $q$-cyclotomic). But  this is  no longer  true for a
descent of scalars. First, a cyclotomic polynomial in $x^n$ decomposes in
several cyclotomic polynomials according to the formula
$\Phi_d(x^n)=\prod_{\{\mu|n, \frac n\mu\text{ prime to } d\}}\Phi_{\mu d}(x)$
(see \cite[Appendice 2]{BM}).
But there could be further decompositions:
for instance, the characteristic polynomial of $F^*$ on
a  Coxeter torus of a semisimple group  $\bG$ of type $B_2$ over $\BF_2$ is
$x^2+4$,   which  is  $\BZ$-irreducible.  But   on  a  descent  of  scalars
$\bG\times\bG$, the characteristic polynomial of $F^*$ on a lift of scalars
of this torus is $x^4+4$ which is no longer $\BZ$-irreducible:
$x^4+4=(x^2+2x+2)(x^2-2x+2)$, so the torus seen inside the descent of scalars
is no longer $F$-indecomposable.

We  could  have  decomposed  $\card\GF$  into  a  product of $q$-cyclotomic
polynomials  corresponding to $F$-indecomposable  tori, but in  the case of
descent of scalars it was convenient to use larger tori.
\end{remark}
\begin{remark}
An arbitrary semisimple reductive group is of the form
$\bG=\bG_1\ldots\bG_k$,  an almost direct product of descents of scalars of
quasi-simple  groups $\bG_i$,  corresponding to  the orbits  of $F$  on the
connected components of the Dynkin diagram of $\bG$. Then we have
$\card\GF=\card{\bG_1^F}\ldots\card{\bG_k^F}$  by  Lemma \ref{almost
direct},  and  similarly, 
if  $\bS$  is  an $F$-stable torus of $\bG$, and
$\bS_i=\bS\cap\bG_i$,  then  $\card\SF=\card{\bS_1^F}\ldots\card{\bS_k^F}$.
This can be used to give a global decomposition of $\card\GF$, but the
polynomials $\cP$ in one factor could divide those in another.
For instance 
we could have $\Phi'_{2,4}$ for a factor of $\bG$ of type $\lexp 2B_2$ and
$\Phi_8$ for another factor of type $B_2$.
Because of this it is cumbersome to give a global statement.
\end{remark}

From  now on we  fix $(\bG,F)$ as  in \ref{sylow descent}, which determines
$q,n,$  and $\eta$ minimal  such that $q^{n\eta}\in\BZ$.  This allows in the
next  definition to  omit the  mention of  $\bG$ and  $F$ from the notation
$d(\ell)$.
\begin{definition}\label{d(l)}
Let  $\ell$  be  a  prime  number  different  from  $p$.  In the context of
\ref{sylow  descent}  we  define  $d(\ell)$  as  the  order  of  $q^{n\eta}
\pmod\ell$ ($\pmod 4$ if $\ell=2$).
\end{definition}
In particular $\ell|\Phi_{d(\ell)}(q^{n\eta})$.

The next proposition extends some of the Sylow theorems of \cite{BM}, and
introduces a complex reflection group $W_\Phi$ attached to each $\Phi$
in the set $\cP$ of \ref{sylows}.
\begin{proposition}\label{gensylow}
Under the assumptions of \ref{sylow descent}, 
let $\bT$ be an $F$-stable maximal torus of $\bG$ in an $F$-stable Borel
subgroup, and let $W\phi\subset\GL(X(\bT))$ be the reflection coset
associated to $(\bG,F)$.
Then for each $\Phi\in\cP$:
\begin{itemize}
\item[(1)] If $\zeta$ is a root of $\Phi(x^n)$ and $w$ is 
as in \ref{coset}(2), a maximal torus of $\bG$ of type $w$ with respect to
$\bT$ contains a unique Sylow $\Phi$-subtorus $\bS$.
\end{itemize}
For $\zeta,w$ as in (1) let $W_\Phi=N_W(V_\zeta)/C_W(V_\zeta)$
where $V_\zeta$ is the $\zeta$-eigenspace of $w\phi$ on $V=X(\bT)\otimes\BC$.
\begin{itemize}
\item[(2)] For $\bS$ as in (1) we have
$N_\GF(\bS)/C_\GF(\bS)=N_\bG(\bS)/C_\bG(\bS)\simeq W_\Phi$, and 
$W_\Phi$ can be identified to a subgroup of $\GL(X(\bS))$.
\item[(3)] The Sylow $\Phi$-tori of $\bG$ are $\GF$-conjugate.
\item[(4)] Let $\ell\ne p$ be a prime number, and assume
that  $\Phi$ divides $\Phi_{d(\ell)}$ (see Definition \ref{d(l)}).
Then unless $\ell=2$ and  $(\bG_1,F_1)$ is of type $\lexp 2G_2$,
any Sylow $\ell$-subgroup of $W_\Phi$ acts  faithfully on the subgroup
of $\ell$-elements $\SFl$ of $\SF$.
\end{itemize}
\end{proposition}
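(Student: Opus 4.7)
The plan is to identify $\SFl$ explicitly as a quotient of a free lattice over $\BZ_\ell$ on which $W_\Phi$ acts faithfully, and then to deduce faithfulness of the induced action of the Sylow $\ell$-subgroup via the torsion-freeness of a principal congruence subgroup of $\GL_{n_\Phi}(\BZ_\ell)$.

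First I would use Lemma \ref{descent} to reduce to the quasi-simple case, working with $q_1=q^n$ in place of $q$ and with the same $d(\ell)$, $\eta$ and $n_\Phi$. From the standard isomorphism $\Irr(\SF)\simeq X(\bS)/(F^*-1)X(\bS)$ and self-duality of finite abelian groups, $\SFl\simeq X(\bS)\otimes_\BZ\BZ_\ell/(F^*-1)(X(\bS)\otimes_\BZ\BZ_\ell)$ as a $W_\Phi$-module. Writing $F^*=q(w\phi)$ on $X(\bS)$ (so that the characteristic polynomial of $w\phi$ there is $\Phi^{n_\Phi}$) and using that the hypothesis $\Phi\mid\Phi_{d(\ell)}$ together with $d(\ell)\mid\ell-1$ (or $d(2)\in\{1,2\}$ when $\eta=1$) forces $\Phi$ to split over $\BZ_\ell$, I decompose $X(\bS)\otimes\BZ_\ell=\bigoplus_{\zeta'}X_{\ell,\zeta'}$ with each eigenspace free of rank $n_\Phi$. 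On $X_{\ell,\zeta'}$ the operator $F^*-1$ acts as multiplication by $q\zeta'-1$, which is a unit in $\BZ_\ell$ except for the unique $\zeta_0$ reducing to $q^{-1}$ in $\BF_\ell$. Hence
\[
\SFl\simeq X_{\ell,\zeta_0}/(q\zeta_0-1)X_{\ell,\zeta_0}\simeq(\BZ/\ell^{e_\Phi})^{n_\Phi},\qquad e_\Phi:=v_\ell(\Phi(q^n)),
\]
with $W_\Phi$ acting via its action on $X_{\ell,\zeta_0}$.

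The representation of $W_\Phi$ on $X_{\ell,\zeta_0}$ is faithful: indeed $X_{\ell,\zeta_0}\otimes_{\BZ_\ell}\overline{\BQ}_\ell\simeq V_\zeta\otimes_{\BQ(\zeta)}\overline{\BQ}_\ell$ via the embedding $\zeta\mapsto\zeta_0$, and $W_\Phi$ acts faithfully on $V_\zeta$ as a complex reflection group by Proposition \ref{coset}(2); faithfulness of a representation on a free $\BZ_\ell$-module is equivalent to faithfulness on its generic fiber. If $g$ lies in a Sylow $\ell$-subgroup $P\leq W_\Phi$ and acts trivially on $\SFl$, then $g\equiv 1\pmod{\ell^{e_\Phi}}$ in $\GL_{n_\Phi}(\BZ_\ell)$. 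Since $g$ has $\ell$-power order and the principal congruence subgroup of $\GL_{n_\Phi}(\BZ_\ell)$ modulo $\ell^{e_\Phi}$ is torsion-free provided $e_\Phi\geq 1$ (for $\ell$ odd) or $e_\Phi\geq 2$ (for $\ell=2$), it follows that $g=1$.

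It remains to check the bound on $e_\Phi$. For $\ell$ odd it is automatic from $\ell\mid\Phi(q^n)$. For $\ell=2$ and $\eta=1$, $d(2)\in\{1,2\}$ is the order of $q^n$ mod $4$, so $\Phi(q^n)=q^n\mp 1\equiv 0\pmod 4$ and $e_\Phi\geq 2$. For $\eta=2$ and $\ell$ odd, $q\in\BZ[\sqrt p]$ but the entire argument is run over the Dedekind ring $\BZ_\ell[\sqrt p]$, with analogous conclusions. The remaining possibility $\ell=2$ with $\eta=2$ occurs (among groups with $\ell\ne p$) only in $\lexp{2}{G_2}$, which is precisely the excluded case: there $\Phi=\Phi_{2,2}=x^2+1$ fails to split over $\BZ_2$ and the diagonalization step breaks, while the natural workaround over the ramified extension $\BZ_2[i]$ suffers from $e_\Phi$ being too small to kill $2$-torsion in the congruence subgroup. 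The main obstacle is thus organizing this case analysis uniformly around the splitting behavior of $\Phi$ over $\BZ_\ell$; the principal-congruence-subgroup argument itself is the key mechanism and depends crucially on $e_\Phi$ being large enough.
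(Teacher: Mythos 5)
Your proposal addresses only item (4) of the proposition. Items (1)--(3) --- the existence and uniqueness of the Sylow $\Phi$-subtorus inside a maximal torus of type $w$, the identification $N_\GF(\bS)/C_\GF(\bS)=N_\bG(\bS)/C_\bG(\bS)\simeq W_\Phi$ together with the embedding $W_\Phi\subset\GL(X(\bS))$, and the $\GF$-conjugacy of Sylow $\Phi$-tori --- are never proved, and this is not a harmless omission: your argument for (4) \emph{uses} (2). To decompose $X(\bS)\otimes\BZ_\ell$ into $w\phi$-eigenspaces that are $W_\Phi$-stable you need to know that $W_\Phi$ acts on $X(\bS)$ and that this action commutes with $F^*$ there; equivalently, that $C_W(V_\zeta)$ annihilates, and $N_W(V_\zeta)$ stabilizes, every eigenspace $V_{\zeta'}$ for $\zeta'$ running over the roots of $\Phi$. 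In the paper this is exactly the content of (2), obtained from the Galois-conjugacy of the eigenspaces (as in Lemma \ref{descent} and the proof of \ref{sylows}) together with \ref{coset}(4), and in the twisted/descent situations it is where $F$-triviality of the action on $N/C$ enters. So as written the proposal is at best a proof of (4) conditional on (1)--(3).

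Concerning (4) itself, your route is essentially sound when $q^n\in\BZ$ and is genuinely different from the paper's. The paper reduces the whole lattice $X(\bS)$ mod $\ell$ (mod $4$), identifies $\Xbar/m\Xbar$ with $\Ker(F^*-1\mid X/mX)$ via \ref{q-indec}(2), applies Minkowski's Lemma \ref{reduction not id}, and then must compare the various mod-$\ell$ eigenspaces as $W_\Phi$-modules by means of the twisting element of \ref{coset}(5); that is what forces the case analysis on $\delta$ and $d$ in Lemma \ref{faithful} and the separate treatment of $\lexp 3D_4$ and the Ree and Suzuki groups. You instead work $\ell$-adically: since $d(\ell)\mid\ell-1$ (resp. $d(2)\in\{1,2\}$ computed mod $4$) the polynomial $\Phi_{d(\ell)}$ splits over $\BZ_\ell$ with roots that are distinct mod $\ell$ --- this distinctness is the point that legitimizes the \emph{integral} eigenspace decomposition and should be said explicitly --- so $\SFl$ (up to the duality $\SF\leftrightarrow\Irr(\SF)$, which does not change kernels) is the reduction mod $\ell^{e_\Phi}$ of a single free rank-$n_\Phi$ eigenlattice whose generic fibre is a Galois twist of the reflection representation of $W_\Phi$, and the torsion-free congruence subgroup of $\GL_{n_\Phi}(\BZ_\ell)$ finishes. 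This avoids \ref{coset}(5) and the $\delta$, $d$ case analysis entirely, and in fact yields faithfulness of all of $W_\Phi$ (not just its Sylow $\ell$-subgroup) whenever $\ell^{e_\Phi}>2$, which is stronger than the statement. The weak spot is $\eta=2$: ``the entire argument is run over $\BZ_\ell[\sqrt p]$'' is a wave of the hand, although the only non-vacuous instance is $\lexp 2F_4$ with $\ell=3$, where one must pass to the unramified extension $\BZ_3[\sqrt 2]$ (which does contain a root of $x^2+1$), note that exactly one root gives a non-unit $q\zeta'-1$, and observe that faithfulness descends along the faithfully flat base change; with those details, and with (1)--(3) supplied, your argument for (4) stands.
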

\begin{proof}
For  (1) we  consider a  torus $(\bT,wF)$  of type  $w$. Then a $wF$-stable
subtorus corresponds to the span of a subset of eigenspaces of $w\phi$ on $V$.
Since the
polynomials  $\Phi$ are prime to each other the polynomials $\Phi(x^n)$ are
also,  thus  $q\zeta$  is  root  of  no  other factor of the characteristic
polynomial  of $w\phi$  than $\Phi(x^n)$.  Thus the  $\bS$ defined in Lemma
\ref{descent}, which we will denote $\bS_0$, is unique.

Let  us  show  (2).  Let $(\bT_w,F,\bS)$ be conjugate to $(\bT,wF,\bS_0)$.
Let $\bL=C_\bG(\bS)$, which, as the centralizer of a torus, is
a Levi subgroup. Then we note that $N_\bG(\bS)\subset N_\bG(\bL)$. It follows
that we can find representatives of $N_\bG(\bS)$ modulo $\bL$ in $N_\bG(\bT_w)$
since for $n\in N_\bG(\bS)$ the torus $\lexp n\bT_w$ is another maximal torus
of $\bL$ which is thus $\bL$-conjugate to $\bT_w$. We thus get that
$N_\bG(\bS)/\bL=N_\bG(\bS,\bT_w)/(N_\bG(\bT_w)\cap \bL)$; 
transferring this to $\bT$ and then to $W$ we get 
$N_\bG(\bS,\bT_w)/(N_\bG(\bT_w)\cap \bL)\simeq N_W(\bS_0)/C_W(\bS_0)$ 
where $\bS_0$ is the subtorus of $\bT$
determined by $\Ker(P(wF^*))$ where $P=\Phi(x^n/q^n)$.
The action of $F$ is transferred to the action of $w\phi$ on this quotient.

That $N_W(\bS_0)=N_W(V_\zeta)$ and $C_W(\bS_0)=C_W(V_\zeta)$ was given in
\ref{descent}.

By \ref{coset}(4) we see that the action of $w\phi$ on 
$N_W(\bS_0)/C_W(\bS_0)$ is trivial, thus also that of $F$ on
$N_\bG(\bS)/C_\bG(\bS)$, thus
$N_\bG(\bS)/C_\bG(\bS)=(N_\bG(\bS)/C_\bG(\bS))^F
=N_\bG(\bS)^F/C_\bG(\bS)^F=N_\GF(\bS)/C_\GF(\bS)$, 
the second equality since
$\bL=C_\bG(\bS)$ is connected. Finally, the
last part of  (2) results from  the fact that  the representation of
$W_\Phi$  on $X(\bS_0)$, extended to $X(\bS_0)\otimes\BC$ has  as summand the 
representation of  $W_\Phi$  on  $V_\zeta$,  which  is the reflection 
representation, thus faithful.

(3) is a direct translation of \ref{coset}(3): when brought to subtori of
$\bT$   corresponding  to  eigenspaces  of  $w\phi$  (resp.  $w'\phi$)  the
$\GF$-conjugacy of two Sylow $\Phi$-subtori corresponds to the $W$-conjugacy 
of the corresponding eigenspaces.

For (4) we first remark that we can reduce to the case where $\bG$ is
quasi-simple, using \ref{descent}.
Thus either $q\in\BZ$ or $\GF$ is a Ree or a Suzuki group. Let $\delta$ be
the order of the coset $W\phi$, that is the smallest integer such that
$(W\phi)^\delta=W$. We have $\delta\in\{1,2,3\}$.
We first show the
\begin{lemma}\label{faithful}
If $\bG$ is quasi-simple and we are in one of the cases:
\begin{enumerate}
\item $q\in\BZ$ and $\delta\in\{1,2\}$.
\item $q\in\BZ$, $\delta=3$ and $d$ is prime to 3.
\item $q$ is an odd power of $\sqrt 2$ and $\ell=3$.
\end{enumerate}
then $W_\Phi$ acts  faithfully on $\SFl$.
\end{lemma}
\begin{proof}
On  $X(\bT)\otimes\BQ(q\inv)$  we  have  $wF^*=qw\phi$.  The characteristic
polynomial $Q$ of $wF^*$ on $X(\bS)$ is
$q^{n_\Phi\deg\Phi}\Phi(x/q)^{n_\Phi}$;   as  $wF^*$   is  semisimple,  the
minimal  polynomial of $wF^*$ is $P=q^{\deg\Phi}\Phi(x/q)$. We can identify
$X(\bS)$ with $\Ker(P(qw\phi))$ on $X(\bT)$. As in the proof of Proposition
\ref{q-indec},   if  $X=X(\bS)$  we  can  make  $X'=X\otimes\BZ[q\inv]$  an
$A$-module where $A=\BZ[x,q^{-\eta}]/P$. Under the assumptions of the lemma
$A$  is a Dedekind ring. This results  from the proof of \ref{q-indec}(1) when
$q\in\BZ$. In the remaining case (3) of Lemma \ref{faithful}, 
$\eta=2$ and the order of $q^2\pmod 3$ is 
$2$, thus $\Phi=x^2+1$ and $P=x^2+q^2$; we have
$A=\BZ[x,q^{-2}]/P\simeq\BZ[1/2,\sqrt{-2}]$   which  is  integrally  closed
(thus Dedekind)
since  localized  of  $\BZ[\sqrt{-2}]$  which  is  integrally closed. As an
$A$-module  of rank $n_\Phi$, the module $X'$ is a sum of projective rank 1
submodules thus $\bS$ is a product of $n_\Phi$ copies of a
$wF$-indecomposable  torus.  By Proposition \ref{gensylow}(2) we  
can  identify  $W_\Phi$  to  a subgroup of
$\GL(X)$. With the notations of \ref{q-indec}, since the assumption
of \ref{q-indec}(1) is satisfied,
$\Xbar:=X/(wF^*-1)X\simeq\Irr(\bS^{wF})$  is isomorphic  to 
$(\BZ/\Phi(q))^{n_\Phi}$.
The  representation of  $W_\Phi$ on  $X$ reduces to $\Xbar$. We will show
it is faithful on $\Xbar/\ell\Xbar$ (or $\Xbar/4\Xbar$ when $\ell=2$).

If  $q\in\BZ$ and $\ell=2$ then $d\in\{1,2\}$  and we can apply Proposition
\ref{q-indec}(2)  taking $m=4$.  We get  that $\Xbar/4 \Xbar\simeq \Ker
(wF^*-1\mid  X/4  X)$.  We  have  as  observed  in the proof of Proposition
\ref{q-indec}  that $\Ker(wF^*-1)=X/4 X$ and the representation of $W_\Phi$
on  $\Xbar/4 \Xbar$,  which  is  a quotient of $\Irr(\bS_\ell^{wF})$,
is faithful by Lemma \ref{reduction not id}.

If  $q\in\BZ$ and $\ell\ne 2$ then  $d$ is prime  to $\ell$;
and in case (3) of Lemma \ref{faithful}
$\eta=2$, $\ell=3$ thus $d=2$ and $\ell$ is prime to $d\eta$.
In both cases we can apply Proposition
\ref{q-indec}(2)  with  $m=\ell$ to get that $\Xbar/\ell\Xbar
\simeq\Ker(wF^*-1\mid X/\ell X)$. We know by Lemma \ref{reduction not id}
that  the representation of $W_\Phi$ on $X/\ell X$ is faithful and we would
like  to conclude that  it is faithful  on the submodule $\Ker(wF^*-1)$. We
use  the element $v$ given by  Proposition \ref{coset}(5): it preserves the
kernel  of $\Phi(w\phi)$ thus induces an  element of $\GL(X)$ which defines
an automorphism $\sigma$ of $W_\Phi$ which sends $w\phi$ to $(w\phi)^a$, so
it  remains true after reduction $\pmod\ell$ that $\sigma$ sends $w\phi$ to
$(w\phi)^a$,  thus permutes the eigenspaces of  $wF^*$ on $X/\ell X$: since
$d$  is the order of $q\pmod \ell$, all the primitive $d$-th roots of unity
live  in $\BF_\ell$ and  the eigenvalues of  $wF^*$ are the  product of one
primitive $d$-th root of unity, which is $q$, by the other primitive $d$-th
roots   of  unity  so  are  of  the   form  $q^{1-a}$  where  a  runs  over
$(\BZ/d)^\times$.   And   under   the   assumption   $(W\phi)^a=W\phi$   of
\ref{coset}(5)   we   can   find   $v$   thus   $\sigma$  which  sends  the
$q^{1-a}$-eigenspace of $wF^*$ to the $q^{1-1}=1$-eigenspace.

If  every $a$  prime to  $d$ has  a representative  in $1+\delta\BZ$ we can
satisfy  $(W\phi)^a=W\phi$  for  such  $a$  thus  every  eigenspace is
isomorphic  as  a  $W_\Phi$-module  to  $\Ker(wF^*-1)$.  Then  $W_\Phi$  is
faithful  on  the  whole  $X/\ell  X$  if  and  only  if  it is faithful on
$\Ker(wF^*-1)$,  thus we conclude.  If $a\equiv1\pmod{\gcd(d,\delta)}$ then
by   Bezout's  theorem  there  exist   integers  $\alpha,\beta$  such  that
$a=1+\alpha  d+\beta\delta$,  and  then  $a-\alpha  d\in  1+\delta\BZ$ is a
representative of $a$.

If $\delta=1$ or $\delta=2$
then every $a$ prime to $d$ is $\equiv 1 \pmod{\gcd(d,\delta)}$ and we
conclude. We conclude similarly if $\delta=3$ and $d$ is prime to $3$,
or in case (3) of Lemma \ref{faithful} since in this case $d=2$.
\end{proof}
When  $q\in\BZ$ the only case not covered  by the lemma is $\lexp 3D_4$ and
$d$  divisible by $3$, that is $d\in\{3,6,12\}$. But in this case $\ell>3$,
since  $d$  is  the  order  of  $q\pmod\ell$,  thus  $\card W$ is prime to
$\ell$ and a fortiori the Sylow $\ell$-subgroup of $W_\Phi$ is trivial.

For  the Ree and Suzuki groups we do not have to consider $\lexp2B_2$ since
$W$  is a $2$-group and $\ell\ne p$, and the groups $\lexp 2G_2$ since only
the  prime $\ell=2$ divides $\card  W$ and is different  from $p$, and this
case is excluded in the proposition.

For the groups $\lexp 2F_4$ the only prime $\ell\neq p$
such that $\ell|\card W$ is $\ell=3$ and we are in case (3) of the lemma.
\end{proof}
The  Ree group $\lexp 2G_2$ with $\ell=2$ is a genuine counterexample since
the Sylow $2$-subgroups of $\lexp2G_2(q)$ are isomorphic to $(\BZ/2)^3$.
\section{The structure of the Sylow $\ell$-subgroups}
\begin{definition}\label{D(l)}
Let $\bG, F, \bG_1,\cP$ and $n$ be as in \ref{sylow descent} and let 
$\ell\ne p$ be a prime number.
We define  $D(\ell)$ as the  set of integers $d$ such that for some
$\Phi\in\cP$ dividing $\Phi_d(x^\eta)$  we  have  $\ell|\Phi(q^n)$, where
$\eta$ is as in Definition \ref{d(l)}.
\end{definition}
The following proposition is \cite[Th\'eor\`eme 1]{En}
when $\eta=1$; we give here a shorter proof.
Since \cite{En} was written, Malle (\cite[5.14 and 5.19]{malle})
has published a proof of (2) below --- thus implicitly of (1) also---
when $\eta=1$ (giving more, see Theorem \ref{unicity}).
\begin{theorem}\label{Enguehard}
Assume in the situation of \ref{D(l)} that $D(\ell)\ne\emptyset$, or
equivalently that $\ell|\card\GF$. Then
\begin{enumerate}
\item $d(\ell)\in D(\ell)$.
\item There exists a unique $\Phi\in\cP$ such that
 $\ell|\Phi(q^n)$ and $\Phi$ divides $\Phi_{d(\ell)}(x^\eta)$.
If $\bS$ is a Sylow $\Phi$-torus then $N_\bG(\bS)$  contains a
Sylow $\ell$-subgroup of $\GF$ which is an extension of
$(Z^0C_\bG(\bS))^F_\ell$  by a Sylow $\ell$-subgroup of $W_\Phi$.
\item The Sylow $\ell$-subgroups of $\GF$ are abelian if and only if 
 $\card{D(\ell)}=1$ (which is equivalent to $W_\Phi$ being an $\ell'$-group), 
 apart  from the exception where $(\bG_1,F_1)$  is of type $\lexp 2G_2$ and
 $\ell=2$  in which  case $\card{D(\ell)}=2$  and $\card{W_\Phi}=6$ but the
 $2$-Sylow is abelian, isomorphic to $(\BZ/2)^3$.
\end{enumerate}

Further,  if $\bS$ is as in (2),
then  $(Z^0C_\bG(\bS))^F_\ell=\SFl$ except if:
\begin{itemize}
\item $\ell=3$ and $\bG_1$ of type $\lexp 3D_4$. 
\item $\ell=2, d=1$ and for some odd degree $\eps_i=-1$. Equivalently $\bG_1$
is non-split and has an odd reflection degree, that is, is one of $\lexp 2A_n$, 
$\lexp 2D_{2n+1}$ or $\lexp 2E_6$.
\item $\ell=2, d=2$ and for some odd degree $\eps_i=1$; equivalently $\bG_1$
is split and has an odd reflection degree, that is, is
one of $A_n (n>1)$, $D_{2n+1}$ or $E_6$.
\end{itemize}
In the above exceptions, $Z^0C_\bG(\bS)=C_\bG(\bS)$ is a maximal torus
of $\bG$.
\end{theorem}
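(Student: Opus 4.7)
The plan is to construct a Sylow $\ell$-subgroup of $\GF$ explicitly inside $N_\bG(\bS)^F$ for $\bS$ a Sylow $\Phi$-subtorus attached to $d(\ell)$, and then to verify by counting that the $\ell$-part of $\card{N_\bG(\bS)^F}$ is already $\card\GF_\ell$. Throughout I would use the decomposition of $\card\GF$ from \ref{sylow descent}: since $\ell\ne p$ the $\ell$-part equals $\prod_{\Phi\in\cP}\Phi(q^n)_\ell^{n_\Phi}$, and $D(\ell)$ partitions the set of $\Phi\in\cP$ with $\ell\mid\Phi(q^n)$ according to the base $d$ such that $\Phi\mid\Phi_d(x^\eta)$.

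For (1), I would apply the generalized-degree form \ref{order} to $(\bG_1,F_1)$. Since $\ell\mid\card\GF$, some generalized degree $(d_i,\eps_i)$ of $W_1\phi_1$ satisfies $\ell\mid q^{nd_i}-\eps_i$, that is $q_1^{d_i}\equiv\eps_i\pmod\ell$ where $q_1=q^n$. This congruence forces $d(\ell)\mid d_i$ when $\eps_i=1$, and $d(\ell)\mid 2d_i$ with $d(\ell)\nmid d_i$ when $\eps_i=-1$. In either case $\Phi_{d(\ell)}(x^\eta)$ divides $x^{nd_i}-\eps_i$, and at least one of its $q_1$-cyclotomic factors $\Phi\in\cP$ then satisfies $\ell\mid\Phi(q^n)$, proving $d(\ell)\in D(\ell)$ and exhibiting the $\Phi$ of (2). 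Uniqueness when $\eta=1$ is immediate as $\Phi_{d(\ell)}$ is irreducible over $\BZ$; when $\eta=2$ the two factors listed in \ref{delta-cyclo} are Galois conjugate over $\BQ$, and direct inspection of the table in \ref{sylows} shows that in the Ree/Suzuki cases only one reduces to zero modulo $\ell$ at $x=q$.

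For the structural part of (2), set $\bL:=C_\bG(\bS)$ (a Levi) and $N:=N_\bG(\bS)^F$. By \ref{gensylow}(2), $N/\bL^F\simeq W_\Phi$, so $\card{N}_\ell=\card{\bL^F}_\ell\cdot\card{W_\Phi}_\ell$. Applying \ref{order} to $\bL$ itself, the only $\ell$-contribution from its Sylow $\Phi$-subtorus factor equals $\card\SFl=\Phi(q^n)_\ell^{n_\Phi}$, except in the enumerated exceptional cases, where the derived subgroup of $\bL$ contains an odd reflection degree producing one extra factor of $\BZ/\ell$ absorbed into $(Z^0\bL)^F_\ell$ (in those cases $\bL$ is itself a maximal torus, so $Z^0\bL=\bL$). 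On the other hand $\card{W_\Phi}_\ell=\prod_{d\in a(\zeta)}d_\ell$ by Shephard--Todd, and this matches the sum of contributions to $v_\ell(\card\GF)$ from the $d'\in D(\ell)\setminus\{d(\ell)\}$, via the standard identity $v_\ell(\Phi_{d(\ell)\ell^k}(q^{n\eta}))=v_\ell(\ell^k)$ for $k\ge1$ together with \ref{coset}(2). These two computations give $\card{N}_\ell=\card\GF_\ell$, so $N$ contains a full Sylow, realized as the announced extension; the faithful action of \ref{gensylow}(4) guarantees the extension is non-trivial whenever $\ell\mid\card{W_\Phi}$.

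For (3), a Sylow is abelian iff the Sylow of $W_\Phi$ is trivial (the extension becomes trivial and $(Z^0\bL)^F_\ell$ is abelian), which by the count above is equivalent to $D(\ell)=\{d(\ell)\}$. The $\lexp 2G_2$, $\ell=2$ exception was excluded from \ref{gensylow}(4) and is handled by direct computation: $W_\Phi$ has order $6$ but acts non-faithfully on the $2$-part of $\SF$, leaving the Sylow abelian of type $(\BZ/2)^3$. The main obstacle is the order count of the third paragraph: determining precisely when the centralizer Levi $\bL$ contributes an extra $\BZ/\ell$ beyond its Sylow $\Phi$-subtorus requires a case-by-case examination of the centralizers of Springer regular elements across the quasi-simple types of $\bG_1$, and yields exactly the list of exceptional cases in the statement.
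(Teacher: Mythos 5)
Your overall strategy is the same as the paper's: reduce part (2) to a valuation identity $\vl(\card\GF)=\vl(\card{C_\bG(\bS)^F})+\vl(\card{W_\Phi})$, use \ref{gensylow}(2) to get $N_\GF(\bS)/C_\GF(\bS)\simeq W_\Phi$, use \ref{coset}(2) to write $\card{W_\Phi}=\prod_{d_i\in a(\zeta_{d(\ell)})}d_i$, and match $\vl(\card{W_\Phi})$ against the factors $\Phi_{d(\ell)\ell^k}$ occurring in \ref{order}. But the central count as you state it does not hold up. The arithmetic identity is misquoted: by Lemma \ref{divcyclo}, $\vl\bigl(\Phi_{d(\ell)\ell^k}(q^{n\eta})\bigr)=1$ for every $k\ge1$, not $k$; the bookkeeping only closes because $\Phi_{d(\ell)\ell^k}$ occurs in the order formula with multiplicity $\card{\{d_i\in a(\zeta_{d(\ell)})\mid \ell^k\text{ divides }d_i\}}$, which requires the argument that $\zeta_{d(\ell)\ell^k}^{d_i}=\eps_i$ forces $d_i\in a(\zeta_{d(\ell)})$ and $\ell^k\mid d_i$ (true for $\ell$ odd by raising to the $\ell^k$-th power, but needing separate care when $\ell=2$, when the $\eps_i$ are not $\pm1$ as for $\lexp 3D_4$, and when $\eta=2$). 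With your identity as written the count is simply wrong as soon as some $d_i\in a(\zeta_{d(\ell)})$ is divisible by $\ell^2$ (e.g.\ $\ell=2$ and any group with a degree divisible by $4$). Similarly, your argument for (1) via the congruence $q_1^{d_i}\equiv\eps_i\pmod\ell$ is vacuous when $\ell=2$ (and $d(2)$ is defined mod $4$), so the case $\ell=2$ of (1) is not actually covered.

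The second, more serious gap is that the part of the theorem requiring real work --- the exact list of exceptions where $(Z^0C_\bG(\bS))^F_\ell\ne\SFl$, and the verification that there $C_\bG(\bS)$ is a maximal torus absorbing the discrepancy --- is asserted rather than proved: you defer it to an unexecuted ``case-by-case examination of centralizers of Springer regular elements,'' and the mechanism you propose is off, since in the exceptional cases $\bL=C_\bG(\bS)$ is a maximal torus, so there is no odd reflection degree of a derived subgroup of $\bL$ to invoke. In the paper the list falls out of the same valuation count: for $\ell=2$ each \emph{odd} degree $d_i$ of $\bG_1$ with the ``wrong'' sign ($\eps_i=-1$ when $d(2)=1$, $\eps_i=1$ when $d(2)=2$) contributes exactly one extra factor $2$ beyond $\vl(\card\SF)+\vl(\card{W_\Phi})$, which forces the listed types, and one then checks that the corresponding torus $C_\bG(\bS)$ of type $1$ or $w_0$ supplies precisely these extra $\Phi_1$ or $\Phi_2$ factors. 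Moreover the exception $\lexp 3D_4$ with $\ell=3$ is of a different nature (the $\eps_i$ are cube roots of unity, outside your $\eps_i=\pm1$ framework), and the Ree and Suzuki groups ($\eta=2$) need the order formulas of \ref{sylows} to be inspected directly; neither is handled by your count. Finally, for (3) the ``only if'' direction needs \ref{gensylow}(4) applied to a nontrivial Sylow $\ell$-subgroup of $W_\Phi$ acting faithfully on $\SFl$ (you mention this), but the equivalence of $\card{D(\ell)}=1$ with $W_\Phi$ being an $\ell'$-group still needs the $\ell=2$ observation that $W_\Phi$ is then a Coxeter group of even order, which your sketch omits.
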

\begin{proof}
Let us note that to prove (2) when we are not in an exception, that is the
stronger statement that a Sylow $\ell$-subgroup is in an extension of
$\SF$ by a Sylow $\ell$-subgroup of $W_\Phi$,  it is  enough to prove that
$$\vl(\card\GF)=\vl(\card\SF)+\vl(\card{W_\Phi})\eqno(*)$$
where $\vl$ denotes the $\ell$-adic valuation,
and in the exceptions, if we have proved that 
$Z^0C_\bG(\bS)=C_\bG(\bS)$ it is enough to show
$$\vl(\card\GF)=\vl(\card{C_\bG(\bS)^F})+\vl(\card{W_\Phi})\eqno(**)$$

Note  also that by the definition of $d(\ell)$ and $D(\ell)$ in Proposition
\ref{sylow  descent}, assertion  (1) as  well as  formulae (*) and (**) are
equivalent  in $\bG$ and $\bG_1$, that  is we may assume $\bG$ quasi-simple
to  prove  them  which  we  do  now.  Also,  in view of (2) and Proposition
\ref{gensylow}(4),   (3)  reduces  to   proving:

(3') $\card{D(\ell)}=1$  is equivalent to $W_\Phi$ being an $\ell'$-group.

We first look at the case of a Ree or Suzuki group, where $\eta=2$.

Let  us  prove  (1)  first.  By  Lemma  \ref{divcyclo}  if  $\ell$  divides
$\card\GF$   then  there   is  an   element  of   $D(\ell)$  of   the  form
$d(\ell)\ell^b$  with $b\ge 0$. By inspecting the order formula for $|\GF|$
given in the proof of \ref{sylows} the elements of $D(\ell)$ have all their
prime   factors  in  $\{2,3\}$,  so  $b>0$  implies  $\ell\in\{2,3\}$  thus
$d(\ell)\in\{1,2\}$;  inspecting  again  the  formula,  we  see  that  then
$d(\ell)$ in $D(\ell)$ and that $\card{D(\ell)}=1$ unless $\ell\in\{2,3\}$.

To  prove  (2)  for  $\ell\notin\{2,3\}$,  we  observe  there  is  a single
$\Phi\in\cP$ such that $\ell|\Phi(q)$ since the two numbers
$\Phi'_{2,4}(q),\Phi''_{2,  4}(q)$ are  prime to  each other,  and the same
observation  applies to  $\Phi'_{2, 6}(q),\Phi''_{2,  6}(q)$ and $\Phi'_{2,
12}(q),  \Phi''_{2, 12}(q)$. Thus for  $\ell\notin\{2,3\}$ assertions (3')
and (*) are obvious since $\card\GF_\ell=\card\SF_\ell$ and $\ell\not| \card W$.

Let  us prove (*) for $\ell\in\{2,3\}$;  since $\ell\ne p$ and the elements
of $D(\ell)$ have only $2$ as prime factor in the case $\lexp 2B_2$, we have
just to consider:

\begin{itemize}
\item  $\ell=3$ for $\lexp 2F_4$: we have $d(3)=2$, $W_{\Phi_{2,2}}=G_{12}$
of  order $48$;  the only  factor $\Phi(q)$  with a  value divisible by $3$
apart    from    $\card\SF=\Phi_{2,2}(q)^2$    is    $\Phi_{2,6}(q)$    and
$v_3(\Phi_{2,6}(q))=1=v_3(\card{G_{12}})$ which proves this case.
\item $\ell=2$ for $\lexp 2G_2$: we have $d(2)=2$ and
$\card{W_{\Phi_{2,2}}}=6$;  the only  factor $\Phi(q)$  with an  even value
apart from $\card\SF=\Phi_{2,2}(q)$ is $\Phi_{2,1}(q)$ and
$v_2(\Phi_{2,1}(q))=1=v_2(\card{W_\Phi})$ which proves this case.
\end{itemize}
We have seen (3') along the way.

Now  we  look  at  the  other  quasi-simple groups thus $\eta=1$. We notice
generally  that,  assuming  we  have  proved (1) then if $\card{D(\ell)}=1$
assertion  (2) is trivial  since a Sylow  $\ell$-subgroup is then in $\bS$,
and (3') reduces to checking that $W_\Phi$ is an $\ell'$-group.

We consider separately $\lexp 3D_4$ where $\card{\lexp
3D_4(q)}=q^{12}(\Phi_1^2\Phi_2^2\Phi_3^2\Phi_6^2\Phi_{12})(q)$.      Again,
since the only prime factors of elements of $D(\ell)$ are $\{2,3\}$, we see
that  $d(\ell)\in D(\ell)$  except possibly  if $\ell\in\{2,3\}$;  but in
that  case $d(\ell)\in\{1,2\}$  and there  is a factor $\Phi_{d(\ell)}(q)$,
whence  (1).  Since  $\card  W=3\cdot  2^6$  assertion  (3') is proved when
$D(\ell)=1$.  It remains to prove (2)  when $\ell\in\{2,3\}$. In both cases
$W_{\Phi_{d(\ell)}}=W(G_2)$ and by Lemma \ref{divcyclo}
$\vl(\card\GF/\card\SF)=2$.  If $\ell=2$  then $2=\vl(\card{W(G_2)})$ which
proves  (*).  If  $\ell=3$  a  Sylow  $\Phi$-subtorus  $\bS$  is in a torus
$\bT_w=C_\bG(\bS)$  where $w=1$ if $d=1$ (resp.  $w=w_0$ if $d=2$). We have
$\card{\bT_1^F}=\Phi_1(q)^2\Phi_3(q)$ (resp.
$\card{\bT_{w_0}^F}=\Phi_2(q)^2\Phi_6(q)$)  which has same $3$-valuation as
$\card\GF/\card{W_\Phi}$ which proves (**).

In the remaining cases $\eps_i=\pm  1$  for  all  $i$.  Let us set 
$\zeta_d=e^{2i\pi/d}$. We have $\Phi=\Phi_{d(\ell)}$ and 
$\vl(\card\SF)=\card{a(\zeta_{d(\ell))})}\vl(\Phi_{d(\ell)}(q))$.

We  first  treat  the  case  $\ell$  odd.  We  have  $a(\zeta_d)=\{ d_i\mid
\zeta_d^{d_i}=\eps_i\}$ and $\card{W_\Phi}= \prod_{d_i\in
a(\zeta_{d(\ell)})}d_i$.  By Lemma \ref{divcyclo},  a factor $\Phi_e(q)$ of
$\card\GF$  can contribute  to the  $\ell$-valuation only  if $e$ is of the
form  $d(\ell)\ell^b$ for some $b\ge 0$.  Further  such  a  factor  appears  if  and  only if
$a(\zeta_e)\ne\emptyset$, that is for some $i$ we have
$\zeta_{d(\ell)\ell^b}^{d_i}=\eps_i$.  Since  $\ell$  is  odd  raising this
equality  to the  power $\ell^b$  gives $\zeta_{d(\ell)}^{d_i}=\eps_i$ thus
$d_i\in  a(\zeta_{d(\ell)})$  and  in  particular $d(\ell)\in D(\ell)$. And
$\zeta_{d(\ell)\ell^b}^{d_i}=\eps_i$  implies that  $\ell^b$ divides $d_i$.
Thus  only the $d_i$ in  $a(\zeta_{d(\ell)})$ contribute to $\vl(\card\GF)$
and each of them contributes
$\vl(\Phi_{d(\ell)}(q))+\vl(\Phi_{d(\ell)\ell}(q))+
\ldots+\vl(\Phi_{d(\ell)\ell^{\vl(d_i)}}(q))$. By Lemma \ref{divcyclo} this
is $\vl(\Phi_{d(\ell)}(q))+\vl(d_i)$. Summing over $d_i\in a(\zeta_{d(\ell)})$ 
proves (*).

It  remains the case $\ell=2$ where  we proceed similarly. We have
$d(2)\in\{1,2\}$. If $d(2)=1$  then $a(1)=\{d_i\mid \eps_i=1 \}$.
Thus  the  condition  $\zeta_{2^b}^{d_i}=\eps_i$  is  still  equivalent  to
$2^b|d_i$;  but there could  be some more  solutions of this equation than
elements of $a(1)$ when
$b=1$:  any odd  $d_i$ such  that $\eps_i=-1$  brings an additional factor
$1=v_2(\Phi_2(q))$.  If  $d(2)=2$  then  $a(-1)=\{d_i\mid \eps_i=(-1)^{d_i}\}$.
The contribution of the
even  $d_i$ can be worked out as before;  but this time the odd $d_i$ where
$\eps_i=1$  bring additional  factors $v_2(\Phi_1(q))$.  
In  the exceptions in each case $C_\bG(\bS)$ is a maximal torus of type $1$
or  $w_0$; looking at the  orders of these tori,  they contain enough extra
$\Phi_1$  or $\Phi_2$ factors  (which correspond to  the eigenvalues $1$ or
$-1$ of $\phi$ or $w_0\phi$) to compensate the discrepancy.

Let   us  show  now  (3'),  which reduces to proving
that   $\card{D(\ell)}>1$ implies  $\vl(\card{W_\Phi})>0$. Thus we assume
$\card{D(\ell)}>1$. We first do the case $\ell=2$; then
$d(\ell)\in\{1,2\}$ from which it follows, since the $1$ and $-1$-eigenspaces
are defined over the reals, that $W_\Phi$ is a Coxeter group,
whose  order is always  even. We consider finally $\ell$ odd; 
then $D(\ell)\owns d(\ell)$  and $d(\ell)\ell^a$ for  some $a>0$. 
But  we have seen above that
there  exists a factor $\Phi_{d(\ell)\ell^a}(q)$  only if $\ell^a| d_i$ for
some $d_i$ in $a(\zeta_{d(\ell)})$.
\end{proof}

We   remark  that  if   $\ell$  divides  only   one  $\Phi_d(q)$,  a  Sylow
$\ell$-subgroup  $S$  lies  in  a  single  Sylow  $\Phi$-torus  $\bS$  (the
intersection  of two  tori has  lower dimension  so cannot  have same order
polynomial). It follows that $N_\GF(S)=N_\GF(\bS)$ and
$C_\GF(S)=C_\GF(\bS)$. This observation is a start for describing the
$\ell$-Frobenius category of $\GF$ in terms of the category of
$\zeta_d$-eigenspaces of $W_{\Phi_d}$.

In general, one can deduce the following unicity theorem from the work of 
Cabanes, Enguehard and Malle.

\begin{theorem}\label{unicity} Consider  $\bG,F,n,\bG_1,q$  as in
\ref{sylow descent} with
$q^n\in\BZ$ and let $\Phi$ as defined in Theorem \ref{Enguehard}, (2). 
Assume that we are not in one of the following cases:
\begin{itemize}
\item $\ell=3$, $\bG_1$ simply connected of type $A_2$, $\lexp 2A_2$ or $G_2$.
\item $\ell=2$, $\bG_1$ simply connected of type $C_n, n\ge 1$.
\end{itemize}
Let  $Q$ be a Sylow $\ell$-subgroup of  $\GF$. There is a unique Sylow
$\Phi$-subtorus $\bS$ of $\bG$ such that $Q\subseteq N_\bG(\bS)$.
\end{theorem}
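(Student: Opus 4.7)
The plan is to handle existence in one stroke using Theorem \ref{Enguehard}(2) and the $\GF$-conjugacy of Sylow $\Phi$-subtori (Proposition \ref{gensylow}(3)): fix any Sylow $\Phi$-subtorus $\bS_0$, choose via the theorem a Sylow $\ell$-subgroup $Q_0$ of $N_\bG(\bS_0)^F$, and conjugate by some $g\in\GF$ with $\lexp gQ_0=Q$ to obtain $\lexp g\bS_0$, a Sylow $\Phi$-subtorus whose normalizer contains $Q$.

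For uniqueness I would take two Sylow $\Phi$-subtori $\bS_1,\bS_2$ with $Q\subseteq N_\bG(\bS_i)$ and first observe that $(\bS_i^F)_\ell\subseteq Q$ for each $i$: indeed $Q$ normalizes the abelian $\ell$-group $(\bS_i^F)_\ell$, so $Q\cdot(\bS_i^F)_\ell$ is an $\ell$-subgroup of $\GF$ and Sylow maximality forces the containment. Thus each $(\bS_i^F)_\ell$ is a normal abelian $\ell$-subgroup of $Q$.

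The strategy is then to recover $\bS$ canonically from $Q$. I would aim to single out a distinguished normal abelian $\ell$-subgroup $A\trianglelefteq Q$ satisfying $A=(\bS_i^F)_\ell$ whenever $Q\subseteq N_\bG(\bS_i)$, together with the equality $C_\bG(A)^0=\bL_i:=C_\bG(\bS_i)$. Given such $A$, the Levi $\bL:=C_\bG(A)^0$ is independent of $i$, so $\bS_1$ and $\bS_2$ both arise as Sylow $\Phi$-subtori of $Z^0\bL$; since this latter torus has a \emph{unique} Sylow $\Phi$-subtorus (direct application of Proposition \ref{gensylow}(3) inside $\bL$), it follows that $\bS_1=\bS_2$.

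The hard part will be verifying two ingredients: (a) $C_\bG((\bS^F)_\ell)^0=C_\bG(\bS)$, a numerical condition asking that no root of $\bG$ with respect to $\bS$ vanish identically on the finite group $(\bS^F)_\ell$; and (b) that $(\bS^F)_\ell$ is distinguished inside $Q$, for instance as the largest normal abelian $\ell$-subgroup whose connected centralizer is a Levi of the prescribed generic $\Phi$-type. The excluded cases---$\bG_1$ simply connected of type $A_2,\lexp 2A_2,G_2$ at $\ell=3$ and of type $C_n$ at $\ell=2$---are precisely those in which (a) fails (small torsion producing extra centralizing roots) or (b) fails (the normal $\ell$-structure of $Q$ becoming more subtle). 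For the Frobenius case $\eta=1$, both (a) and (b) are established in the work of Cabanes-Enguehard and Malle (see \cite[5.14, 5.19]{malle}); the residual Suzuki and Ree cases with $\eta=2$ can be handled by direct inspection using the explicit tables in the proof of Proposition \ref{sylows} and the faithfulness statement of \ref{gensylow}(4).
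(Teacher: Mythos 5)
Your existence step is fine (and is implicit in the paper), and your reduction of uniqueness to the two ingredients (a) $C_\bG((\bS^F)_\ell)^0=C_\bG(\bS)$ and (b) the canonical determination of $(\bS^F)_\ell$ inside $Q$ is logically sound. The genuine gap is that (a) and (b) are the entire difficulty, and the sources you invoke do not supply them in the generality you need. Cabanes' result \cite{cabanes} makes $\SF_\ell$ characteristic in $Q$ only under restrictions (the paper quotes it as holding ``often\dots for example when $\ell\ge5$''), and Malle's Theorems 5.14 and 5.19 prove a different statement, namely the inclusion $N_\GF(Q)\subseteq N_\bG(\bS)$ for quasi-simple groups outside the listed exceptions, obtained in the small-$\ell$ cases precisely by induction and case inspection where the characteristic-subgroup property is not available. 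So your assertion that the excluded types are ``precisely'' where (a) or (b) fails is unverified, and for $\ell\in\{2,3\}$ in non-excluded types your argument has no source for (b). The paper sidesteps this: it takes Malle's inclusion \ref{normalizer} as the input and deduces uniqueness by a short Frattini/Sylow argument (Lemma \ref{bibi}): if $Q\subseteq N_\bG(\bS')$, conjugacy of Sylow $\Phi$-tori gives $g\in\GF$ with $\bS=(\bS')^g$, Sylow's theorem in $N_\bG(\bS)^F$ gives $h$ with $Q^{gh}=Q$, and then $gh\in N_\GF(Q)\subseteq N_\bG(\bS)$ forces $\bS'=\bS$. If you replace (a)/(b) by that inclusion and add this step, your argument collapses into the paper's; as written, it requires a strictly stronger (and uncited) input.

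Two further points. The $\bG$ of the theorem is a descent of scalars $\bG_1^n$, while the cited results concern quasi-simple groups; the paper devotes a paragraph to transferring the inclusion through the morphism $\bG\to\bG_1$ whose restriction is an isomorphism $\GF\simeq\bG_1^{F_1}$ matching $\bS$ with $\bS_1$, and your sketch omits this reduction entirely. Also, your closing remark about handling the Suzuki and Ree cases with $\eta=2$ is moot, since the hypothesis $q^n\in\BZ$ excludes them from the statement. (Minor: uniqueness of the Sylow $\Phi$-subtorus of the torus $Z^0\bL$ is not Proposition \ref{gensylow}(3), which asserts $\GF$-conjugacy in $\bG$; what you need is the elementary fact that an $F$-stable torus has a unique maximal $\Phi$-subtorus, in the spirit of \ref{gensylow}(1).)
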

\begin{proof}  In the context  of Theorem \ref{Enguehard}(2),  let $Q$ be a
Sylow $\ell$-subgroup of $\GF$ contained in $N_\bG(\bS)$; then according to
\cite{cabanes},  $\bS^F_\ell$ is often characteristic  in $Q$ (for example
when $l\ge 5$), thus in these
cases   $N_\GF(Q)\subseteq   N_\bG(\bS^F_\ell)$.   Using  inductively  that
property and inspecting small cases, G.~Malle has proved the inclusion
\begin{equation}\label{normalizer}
N_\GF(Q)\subseteq N_\bG(\bS) 
\end{equation}
for  all quasi-simple groups  $\bG$ short of  the cases excluded in Theorem
\ref{unicity},  see \cite[Theorems 5.14  and 5.19]{malle}. Here  $\bS$ is a
Sylow  $\Phi_{d(\ell)}$-subtorus  of  $(\bG,F)$  as  defined  in Definition
\ref{d(l)}  with  $\eta=1$  (note  that  $N_{\GF}(Q)\subseteq N_{\bG}(\bS)$
implies $Q \subseteq N_{\bG}(\bS)$).

We  first verify that the last inclusion holds more generally in a "descent
of  scalars".  With  hypotheses  and  notations  of  Proposition \ref{sylow
descent}  and Lemma \ref{descent} assume $q^n\in\BZ$. If $e=d(\ell)$ is the
order   of   $q^n$   modulo   $\ell$,  take  $\Phi=\Phi_e\in\cP$,  defining
$\bS=\bS_\Phi$  and $\bS_1$. There  is a morphism  from $\bG$ onto $\bG_1$,
sending  $\bS$ to  $\bS_1$, with  restriction an  isomorphism from $\GF$ to
$\bG^F_1$.  Then a  Sylow $\ell$-subgroup  $Q_1$ of  $\bG^F_1$ contained in
$N_{\bG_1}(\bS_1)$  is the isomorphic image  of a Sylow $\ell$-subgroup $Q$
of  $\GF$  contained  in  $N_{\bG}(\bS)$.  The  inclusion  \ref{normalizer}
written  with  $(\bG_1,F_1,Q_1,\bS_1)$  instead  of $(\bG,F,Q,\bS)$ implies
\ref{normalizer} in $(\bG,F)$.

From \ref{normalizer} the  unicity  of  $\bS$,  given   $Q$,  follows:
\begin{lemma}\label{bibi}   Let   $\Phi\in\cP$,   let   $\bS$  be  a  Sylow
$\Phi$-subtorus  of $(\bG,F)$ and $Q$ a  Sylow $\ell$-subgroup of $\GF$. If
$N_\GF(Q)\subseteq N_\bG(\bS)$, then $\bS$ is the unique Sylow $\Phi$-torus
of $(\bG,F)$ such that $Q\subseteq N_\bG(\bS)$.
\end{lemma}
\begin{proof}
Assume  $Q\subseteq  N_\bG(\bS')$  for  some  Sylow  $\Phi$-torus $\bS'$ of
$(\bG,F)$.  By Proposition \ref{gensylow} there  exists $g\in\GF$ such that
$\bS=(\bS')^g$,  hence  $Q^g\subseteq  N_\bG(\bS)$.  By  Sylow's theorem in
$N_\bG(\bS)^F$,  $Q=Q^{gh}$  for  some  $h\in  N_\bG(\bS)^F$  hence  $gh\in
N_\bG(\bS)$ by our hypothesis.
\end{proof}
\end{proof}
	
\section{Appendix}
We gather here arithmetical lemmas used above.
\begin{lemma}\label{div}
Let $x,f,\ell\in\BN$ where $\ell$ is prime, and assume
$x\equiv 1\pmod\ell$ (resp. $\pmod 4$ if $\ell=2$).  Then 
$\vl(\dfrac{x^f-1}{x-1})=\vl(f)$.  
\end{lemma}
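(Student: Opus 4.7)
The plan is to prove the identity by a direct binomial expansion and then extract $\ell$-adic valuations term by term. Setting $y = x-1$, so that $v_\ell(y) \ge 1$ (and $v_\ell(y) \ge 2$ if $\ell=2$), I rewrite
\[
\frac{x^f-1}{x-1} = \sum_{i=0}^{f-1}(1+y)^i = \sum_{j=0}^{f-1} \binom{f}{j+1}\, y^j,
\]
using the hockey-stick identity $\sum_{i=j}^{f-1}\binom{i}{j} = \binom{f}{j+1}$. The $j=0$ term is just $f$, contributing valuation exactly $v_\ell(f)$. So the task reduces to showing that every term with $j\ge 1$ has strictly larger $\ell$-adic valuation, from which the result follows.

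For the $j \ge 1$ terms, I would use the identity $(j+1)\binom{f}{j+1} = f\binom{f-1}{j}$, which gives the estimate
\[
v_\ell\!\left(\binom{f}{j+1}\right) \;\ge\; v_\ell(f) - v_\ell(j+1).
\]
Combined with $v_\ell(y^j) = j\,v_\ell(y) \ge ja$, where I set $a := v_\ell(y)$, this yields
\[
v_\ell\!\left(\binom{f}{j+1} y^j\right) \;\ge\; v_\ell(f) + \bigl(ja - v_\ell(j+1)\bigr).
\]
Thus it is enough to verify the strict inequality $ja > v_\ell(j+1)$ for all $j \ge 1$.

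The main (and only real) obstacle is checking this inequality uniformly. Since $\ell^{v_\ell(j+1)} \le j+1$, it suffices to show $\ell^{ja} > j+1$. When $\ell$ is odd we have $a \ge 1$, and $\ell^j \ge 3^j > j+1$ for all $j \ge 1$; when $\ell=2$ we have $a \ge 2$, and $\ell^{ja} \ge 4^j > j+1$ for all $j \ge 1$. These are trivial exponential-versus-linear estimates that can be verified directly at $j=1$ and then by an easy induction (or just by observing that both $3^j/(j+1)$ and $4^j/(j+1)$ are increasing for $j\ge 1$). With this verified, every $j \ge 1$ term has $\ell$-adic valuation strictly greater than $v_\ell(f)$, so the sum has valuation exactly $v_\ell(f)$, completing the proof.
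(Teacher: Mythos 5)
Your proof is correct, but it takes a genuinely different route from the paper's. The paper first reduces to the case where $f$ is prime, via the telescoping factorization $\frac{x^{f_1f_2}-1}{x-1}=\frac{x^{f_1f_2}-1}{x^{f_2}-1}\cdot\frac{x^{f_2}-1}{x-1}$ (noting the hypothesis on $x$ is inherited by $x^{f_2}$), and then for prime $f$ expands $\frac{x^f-1}{x-1}=f+\sum_{i\ge 2}\binom{f}{i}(x-1)^{i-1}$ and argues mod $\ell$ or mod $\ell^2$, with a case distinction between $f\ne\ell$ and $f=\ell$ (where the last term needs the mod $4$ hypothesis when $\ell=2$). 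You instead treat all $f$ at once: writing $y=x-1$, $a=v_\ell(y)$, you expand $\frac{(1+y)^f-1}{y}=\sum_{j\ge 0}\binom{f}{j+1}y^j$, isolate the $j=0$ term $f$, and show every term with $j\ge 1$ has strictly larger valuation using the absorption identity $(j+1)\binom{f}{j+1}=f\binom{f-1}{j}$ together with the elementary estimates $\ell^{ja}\ge 3^j>j+1$ (for $\ell$ odd, $a\ge 1$) and $4^j>j+1$ (for $\ell=2$, $a\ge 2$). All the steps check out: the hockey-stick rewriting, the bound $v_\ell\bigl(\binom{f}{j+1}\bigr)\ge v_\ell(f)-v_\ell(j+1)$, and the strict inequality $ja>v_\ell(j+1)$, which by the ultrametric property pins the valuation of the sum at exactly $v_\ell(f)$. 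Your version buys uniformity --- no reduction to prime $f$ and no case split on $f=\ell$ versus $f\ne\ell$ --- at the modest cost of the binomial-coefficient identity and the exponential-versus-linear estimate; the paper's version keeps each individual step closer to a bare congruence computation but needs the multiplicative reduction as scaffolding. Both are complete elementary proofs of this ``lifting the exponent'' statement, and both make essential use of the strengthened hypothesis $x\equiv 1\pmod 4$ when $\ell=2$.
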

\begin{proof}
From $\dfrac {x^{f_1f_2}-1}{x-1}=\dfrac {x^{f_1f_2}-1}{x^{f_2}-1}
\dfrac{x^{f_2}-1}{x-1}$ we see that it is enough to show the lemma when $f$ is
prime. 
We have $\dfrac{x^f-1}{x-1}=f+\sum_{i=2}^{i=f}  (x-1)^{i-1}{f\choose  i}$.
Let $S$ be this last sum; we have $S\equiv f\pmod \ell$,
since  $x-1\equiv 0\pmod \ell$, thus $S$ is prime to $\ell$ when $f\ne\ell$
which shows the lemma in this case. When
$f=\ell$  then all the terms of $S$ but the first one and possibly 
the last one are divisible by
$\ell^2$  since $\ell\choose i$ is divisible  by $\ell$ when $2\le i<\ell$;
the last term is divisible by $\ell^2$ when $\ell-1\ge 2$
which fails only for $f=\ell=2$;  but when $\ell=2$
we  have arranged that $\vl(x-1)\ge 2$  and this time $2(f-1)\ge 1$; thus
$S\equiv f\pmod \ell^2$, whence the lemma.
\end{proof}
The following lemma is in \cite[5.2]{malle};
a short elementary proof results immediately from Lemma \ref{div}.
\begin{lemma}\label{divcyclo}
Let $q,\ell\in\BN$ where $\ell$ is prime.
Let $d$ be the order of $q\pmod \ell$ (or $\pmod 4$ if $\ell=2$).
Then $\ell$ divides $\Phi_e(q)$ if and only if $e$ is
of the form $d\ell^b$ with $b\in\BN$ (or additionally $b=-1$ when $\ell=d=2$), 
and $\vl(\Phi_{d\ell^b}(q))=1$ if $b\ne 0$.
\end{lemma}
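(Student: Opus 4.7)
The plan is to reduce everything to the factorization $q^n - 1 = \prod_{e \mid n} \Phi_e(q)$ and extract individual valuations by a telescoping argument, using Lemma \ref{div} as the sole arithmetic input. I will first handle the generic case $\ell \nmid d$, then dispatch the single exception $\ell = d = 2$.

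In the generic case, $\ell \nmid d$ holds automatically for odd $\ell$ (Fermat gives $d \mid \ell - 1$), and for $\ell = 2$ corresponds to $d = 1$. Then $q^d \equiv 1 \pmod \ell$ (resp.\ $\pmod 4$ if $\ell = 2$), so Lemma \ref{div} with $x = q^d$ and $f = \ell^b$ gives
\[\vl(q^{d\ell^b} - 1) = \vl(q^d - 1) + b. \qquad (\dagger)\]
By definition of $d$, we have $\vl(\Phi_{e}(q)) = 0$ whenever $d \nmid e$ (otherwise $\ell \mid q^e - 1$ would force $q$ to have order dividing $e$ modulo $\ell$); using $\gcd(d,\ell) = 1$, this kills every divisor of $d\ell^b$ of the form $d'\ell^c$ with $d' \mid d$, $d' < d$. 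The expansion of the left side of $(\dagger)$ in cyclotomic factors therefore collapses to $\sum_{c=0}^{b} \vl(\Phi_{d\ell^c}(q)) = \vl(q^d-1) + b$. Since the $c=0$ term equals $\vl(q^d - 1)$ (the other cyclotomic factors of $q^d-1$ being prime to $\ell$), induction on $b$ yields $\vl(\Phi_{d\ell^b}(q)) = 1$ for $b \ge 1$.

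For the converse, suppose $\ell \mid \Phi_e(q)$, so $d \mid e$. Writing $e = d\ell^b m$ with $\gcd(m,\ell) = 1$, Lemma \ref{div} applied with $x = q^d$ and $f = \ell^b m$ yields $\vl(q^e - 1) = \vl(q^d-1) + b$, which by $(\dagger)$ equals $\vl(q^{d\ell^b}-1)$. Since $d\ell^b \mid e$, this forces $\sum_{e'' \mid e,\, e'' \nmid d\ell^b} \vl(\Phi_{e''}(q)) = 0$, and in particular $\vl(\Phi_e(q)) = 0$ unless $e = d\ell^b$, as required.

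The exceptional case $\ell = d = 2$ (equivalently $q \equiv 3 \pmod 4$) will need a short separate treatment. For odd $e > 1$, $q^e \equiv 3 \pmod 4$ gives $\vl(q^e-1) = 1$, which is fully absorbed by $\vl(\Phi_1(q)) = \vl(q-1) = 1$; hence $\vl(\Phi_e(q)) = 0$ for odd $e > 1$, while $\Phi_1(q)$ itself accounts for the $b = -1$ case of the statement. For even $e = 2m$, applying Lemma \ref{div} with $x = q^2$ (noting $q^2 \equiv 1 \pmod 8$, hence $\pmod 4$) and $f = m$ and repeating the argument of the previous two paragraphs finishes the proof. The only real obstacle is this parity bookkeeping for $\ell = 2$; everything else is routine telescoping on top of Lemma \ref{div}.
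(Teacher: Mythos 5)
Your proof is correct and follows exactly the route the paper intends: the paper gives no details for Lemma \ref{divcyclo} beyond asserting that it ``results immediately from Lemma \ref{div}'', and your telescoping of $q^{d\ell^b}-1=\prod_{e\mid d\ell^b}\Phi_e(q)$ together with the separate bookkeeping for $\ell=d=2$ (including the $b=-1$ factor $\Phi_1(q)=q-1$ and the reduction to $x=q^2\equiv 1\pmod 4$) is precisely that elementary argument, carried out correctly.
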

The following lemma is in \cite{M}; we give the proof since it is very short
and the original German proof may be less accessible.
\begin{lemma}\label{reduction not id}
Let $m\in\BN, m>2$. Then the kernel of the reduction map
$\GL(\BZ^n)\to\GL((\BZ/m)^n)$ is torsion-free.
\end{lemma}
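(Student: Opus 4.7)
It suffices to show that any element of prime order $p$ in the kernel is the identity. Write such an element as $A = I + mB$ with $B$ an integer matrix; expanding $A^p = I$ gives
\[
\sum_{k=1}^p \binom{p}{k} m^k B^k = 0.
\]
The strategy is to choose a prime $\ell \mid m$ and use the $\ell$-adic valuation on integer matrices defined by $v_\ell(M) := \min_{i,j} v_\ell(M_{ij})$, for which $v_\ell(M_1 M_2) \ge v_\ell(M_1) + v_\ell(M_2)$ and $v_\ell(cM) = v_\ell(c) + v_\ell(M)$ for $c \in \BZ$. I will argue that, provided $\ell$ is chosen appropriately, the $k=1$ summand $T_1 = pmB$ has strictly smaller $\ell$-adic valuation than every other summand $T_k$; this forces $B = 0$, since otherwise the identity $\sum_{k} T_k = 0$ would give $v_\ell(T_1) \geq \min_{k \geq 2} v_\ell(T_k)$, contradicting strictness.

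Assume $B \neq 0$, set $w = v_\ell(B) < \infty$ and $v = v_\ell(m) \geq 1$. Then $v_\ell(T_1) = v_\ell(p) + v + w$ exactly, while $v_\ell(T_k) \geq v_\ell\binom{p}{k} + kv + kw$. If some prime $\ell \neq p$ divides $m$, I take that $\ell$: then $v_\ell(T_1) = v + w$ while $v_\ell(T_k) \geq k(v+w) \geq 2(v+w)$ for $k \geq 2$, yielding the strict inequality since $v + w \geq 1$, and hence $B = 0$.

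The remaining case is $m = p^v$, where I take $\ell = p$. Then $v_\ell(T_1) = v + 1 + w$; for $2 \leq k < p$ the binomial $\binom{p}{k}$ contributes an extra factor of $p$, so $v_\ell(T_k) \geq 1 + kv + kw$, which strictly exceeds $v + 1 + w$ as soon as $v + w \geq 1$. The main obstacle is the $k = p$ term, of valuation at least $pv + pw$: one needs $(p-1)(v+w) > 1$. This is automatic for $p \geq 3$; for $p = 2$ it demands $v \geq 2$, and this is exactly where the hypothesis $m > 2$ is used, since $m = 2^v > 2$ forces $v \geq 2$. The conclusion $B = 0$ follows in every case.
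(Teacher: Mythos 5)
Your proof is correct, and it takes a somewhat different route from the paper's. Both arguments start the same way (reduce to an element of prime order $p$ and expand a binomial), but the middles differ. The paper first reduces to $m$ a prime power via the decomposition $\GL((\BZ/m)^n)\simeq\prod_i\GL((\BZ/p_i)^n)$, then uses a characteristic-polynomial argument --- $\Phi_p(x)\equiv(x-1)^{p-1}\pmod m$ forces $m\mid p$, hence $m=p$ --- and only then expands $w^p=\Id$ with $w=\Id+xm^a$, $x\not\equiv 0\pmod m$, and checks divisibility by $m$ term by term; the hypothesis $m>2$ enters in controlling the top term $i=p$, exactly as in your argument. You bypass both the reduction to prime powers and the cyclotomic step entirely: by choosing the auxiliary prime $\ell\mid m$ according to whether some prime different from $p$ divides $m$ (in which case $\binom{p}{1}=p$ is an $\ell$-adic unit and the estimate is immediate) or $m$ is a power of $p$ (in which case $m>2$ gives the needed margin against the $k=p$ term), your matrix-valuation $v_\ell(M)=\min_{i,j}v_\ell(M_{ij})$ makes the $k=1$ term strictly dominant in one uniform computation. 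What the paper's route buys is the structural fact $m=p$, after which the divisibility check is a one-liner mod $m$; what yours buys is the elimination of the characteristic-polynomial lemma and the case reduction, at the mild cost of introducing the (easily verified) superadditivity $v_\ell(M_1M_2)\ge v_\ell(M_1)+v_\ell(M_2)$. Your accounting of where $m>2$ is needed ($v\ge 2$ when $m$ is a power of $2$ and $p=2$) matches the sharpness of the bound noted in the paper via the example $-\Id\equiv\Id\pmod 2$.
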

Note that the bound $m>2$ is sharp since $-\Id\equiv\Id\pmod 2$.
\begin{proof}
Let $w\in\GL(\BZ^n)$ be of finite order, $w\ne\Id$
and assume its reduction $v=\Id$. We will derive a contradiction.

Possibly replacing $w$ by a power, we may assume that $w$ is of prime order
$p$. 

Also  $\GL(\BZ^n/m)=\prod_i\GL(\BZ^n/p_i)$ where
$m=\prod_i  p_i$ is the decomposition of $m$ into prime powers, thus we
may assume that $m$ is a prime power.

Since  $w$ is of order  $p$, the polynomial $\Phi_p(x)$  is a factor of the
characteristic  polynomial of $w$. The  characteristic polynomial of $v$ is
the reduction $\pmod m$ of that of $w$, thus we must have
$\Phi_p(x)\pmod m\equiv(x-1)^{p-1}$;  in particular ${p-1\choose 1}\equiv
-1 \pmod m$ thus $m|p$ which implies $m=p$.

Write now $w=\Id+x m^a$ where $x\pmod m\not\equiv0$ and $a\in\BN$.
Then the equation $w^m=\Id$ gives
$\sum_{i=1}^m {i\choose m} x^im^{ai}=0$, which after dividing
by $m^{a+1}$ becomes
$x=-\sum_{i=2}^m {i\choose m} x^im^{a(i-1)-1}$ where all coefficients
on the right-hand side are divisible by $m$ (since $m\ge 3$), which
contradicts $x\pmod m\not\equiv0$.
\end{proof}

\end{document}